\tikzstyle{punkt}=[circle, fill=black, minimum size=1mm,inner sep=0pt, draw]
\def\frk{\frak}               
\def\Phi{{\frk n}}
\def\Phi{{\frk N}}
\def\opn#1#2{\def#1{\operatorname{#2}}} 
\opn\chara{char}
\opn\length{\ell}
\opn\pd{pd}
\opn\rk{rk}
\opn\projdim{proj\,dim}
\opn\injdim{inj\,dim}
\opn\rank{rank}
\opn\depth{depth}
\opn\grade{grade}
\opn\height{height}
\opn\embdim{emb\,dim}
\opn\codim{codim}
\opn\Tr{Tr}
\opn\bigrank{big\,rank}
\opn\superheight{superheight}
\opn\lcm{lcm}
\opn\trdeg{tr\,deg}
\opn\reg{reg}
\opn\lreg{lreg}
\opn\ini{in}
\opn\lpd{lpd}
\opn\size{size}
\opn\bigsize{bigsize}
\opn\cosize{cosize}
\opn\bigcosize{bigcosize}
\opn\sdepth{sdepth}
\opn\sreg{sreg}
\opn\link{link}
\opn\fdepth{fdepth}
\opn\lin{lin}
\opn\ini{in}
\opn\div{div}
\opn\Div{Div}
\opn\cl{cl}
\opn\Cl{Cl}
\opn\Spec{Spec}
\opn\Supp{Supp}
\opn\supp{supp}
\opn\Sing{Sing}
\opn\Ass{Ass}
\opn\Min{Min}
\opn\Mon{Mon}
\opn\dstab{dstab}
\opn\astab{astab}
\opn\Syz{Syz}
\opn\Ann{Ann}
\opn\Rad{Rad}
\opn\Soc{Soc}
\opn\Im{Im}
\opn\Ker{Ker}
\opn\Coker{Coker}
\opn\Am{Am}
\opn\Hom{Hom}
\opn\Tor{Tor}
\opn\Ext{Ext}
\opn\End{End}
\opn\Aut{Aut}
\opn\id{id}
\opn\nat{nat}
\opn\pff{pf}
\opn\Pf{Pf}
\opn\GL{GL}
\opn\SL{SL}
\opn\mod{mod}
\opn\ord{ord}
\opn\Gin{Gin}
\opn\Hilb{Hilb}
\opn\sort{sort}
\opn\initial{init}
\opn\ende{end}
\opn\height{height}
\opn\type{type}
\opn\mdeg{mdeg}
\opn\aff{aff}
\opn\con{conv}
\opn\relint{relint}
\opn\st{st}
\opn\lk{lk}
\opn\cn{cn}
\opn\core{core}
\opn\vol{vol}
\opn\link{link}
\opn\star{star}
\opn\lex{lex}
\opn\sign{sign}
\opn\gr{gr}
\def\pot#1#2{#1[\kern-0.28ex[#2]\kern-0.28ex]}
\opn\dirlim{\underrightarrow{\lim}}
\opn\inivlim{\underleftarrow{\lim}}
\let\to=\rightarrow
\def\Implies{\ifmmode\Longrightarrow \else
	\unskip${}\Longrightarrow{}$\ignorespaces\fi}
\def\implies{\ifmmode\Rightarrow \else
	\unskip${}\Rightarrow{}$\ignorespaces\fi}
\def\iff{\ifmmode\Longleftrightarrow \else
	\unskip${}\Longleftrightarrow{}$\ignorespaces\fi}
\newtheorem{Theorem}{Theorem}[section]
\newtheorem{Lemma}[Theorem]{Lemma}
\newtheorem{Corollary}[Theorem]{Corollary}
\newtheorem{Proposition}[Theorem]{Proposition}
\newtheorem{Example}[Theorem]{Example}
\newtheorem{Question}[Theorem]{Question}
\let\epsilon\varepsilon
\let\kappa=\varkappa
\def\pnt{{\raise0.5mm\hbox{\large\bf.}}}
\newcommand{\tg}{\triangleright}
\begin{document}
	\title{Complemented Lattices of Subracks}
	\author[A. Saki]{A. Saki}
\address{Amir Saki, Department of Pure Mathematics, Faculty of Mathematics and Computer Science, Amirkabir University of Technology (Tehran Polytechnic), 424, Hafez Ave., Tehran 15914, Iran.}
\email{amir.saki.math@gmail.com}
\author[D. Kiani]{D. Kiani\,*}
\thanks{* Corresponding author}
\address{Dariush Kiani, Department of Pure Mathematics, Faculty of Mathematics and Computer Science, Amirkabir University of Technology (Tehran Polytechnic), 424, Hafez Ave., Tehran 15914, Iran, and School of Mathematics, Institute for Research in Fundamental Sciences (IPM), P.O. Box 19395-5746, Tehran, Iran.}
\email{dkiani@aut.ac.ir,  dkiani7@gmail.com}
\begin{abstract}
In this paper, a question due to Heckenberger, Shareshian and Welker on racks in \cite{Wel} is positively answered. A rack is a set together with a self-distributive bijective binary operation. We show that the lattice of subracks of every finite rack is complemented. Moreover, we characterize finite  modular lattices of subracks in terms of complements of subracks.  Also, we introduce a certain class of racks including all finite groups with the conjugation operation, called $G$-racks, and we study some of their properties. In particular, we show that a finite $G$-rack has the homotopy type of a sphere. Further, we show that the lattice of subracks of an infinite rack is not necessarily complemented which gives an affirmative answer to the aformentioned question. Indeed, we show that the lattice of subracks of the set of rational numbers, as a dihedral rack, is not complemented.  Finally, we show that being a Boolean algebra, pseudocomplemented and  uniquely complemented as well as distributivity are equivalent for the lattice of subracks of a rack. 

\smallskip
\vspace{5pt}
\noindent \textsc{Keywords.} Rack,  lattice of subracks, complemented lattice, modular lattice.
\end{abstract}
\maketitle

\section{Introduction}
Nowadays, racks  are used in several fields of pure and applied sciences  by knot theory, for example in  encoding the strands of knot diagrams. The study of an algebraic structure like a rack was started by M. Takasaki in \cite{Tak} where he studied reflections in  finite geometry. Indeed, he used a certain algebraic structure known as key or involutory quandle. Since then, racks, quandles and similar self-distributive systems were used in many papers for studying braids and knots (see \cite{Bri}, \cite{Fen}, \cite{Joy} and \cite{Mat}).  See also \cite{And}, for some  results about racks and their applications in Hopf algebras.

A \textit{rack} is a set together with a self-distributive and bijective operation. A \textit{subrack} of a rack $(R,\tg)$  is a subset $Q$ of $R$ where $(Q,\tg)$ is a rack. The set of all subracks of  a rack $R$  together with the inclusion order is a lattice. More precisely, the meet and the join of two subracks are the intersection of them and the subrack generated by them, respectively. The lattice of subracks of a rack was introduced by I. Heckenberger et al. in \cite{Wel} where the authors started the study of racks  from the combined perspective of both combinatorics and group theory, and they also posed some interesting questions about the lattice of subracks. In \cite{SK1},  a positive answer was given to one of those questions. Indeed, it was shown that the lattice of subracks of any rack is atomic. Moreover in \cite{SK1}, the atoms of the lattice of subracks were shown to be exactly all subracks generated by a singleton of $R$. 

In this paper, we  answer  the following question for finite racks, due to I. Heckenberger et al. in \cite{Wel}:
\begin{Question} \label{Que}
Is there a rack $R$ whose lattice of subracks  is not complemented? 
\end{Question}

Indeed, the above question is the first part of Question 5.2 in \cite{Wel}. In this paper, we give a positive answer to Question \ref{Que} by providing an example among infinite racks. However, we show that the lattice of subracks of any finite rack is complemented. 

Concerning  other properties of lattices of subracks one can mention \cite[Example~2.3]{Wel} where it was shown that the rack of all transpositions in the permutation group $S_n$ together with the conjugation operation is isomorphic to the lattice of all partitions of a set with $n$ elements. It was also shown in \cite[Corollary 2.11]{Wel}  that the topology of  the lattice of subracks of a rack, in the sense of its order complex, could be complicated. 

This paper is organized as follows. In Section \ref{sec}, we provide some preliminaries which we use throughout the paper.  Section \ref{sec1} is devoted to Question \ref{Que} in the case of finite racks and some related problems arising from this question. Indeed, we show that the lattice of subracks of any finite rack is complemented.  Moreover, we show  how  modularity and  complements of subracks in the lattice of subracks of a finite rack are related. In Section \ref{G}, we  introduce a certain class of racks including all finite groups with the conjugation operation, called  $G$-racks, and we study some  of their properties such as their homotopy types. In Section \ref{S}, we eventually give a positive answer to Question \ref{Que} using infinite racks. Actually, we show that the abelian group $\mathbb{Q}$, as a dihedral rack, is not complemented. At the end of this section, we show that being a Boolean algebra, pseudocomplemented, uniquely complemented and distributivity are equivalent for the lattice of subracks of a rack.

\section{Preliminaries}\label{sec}
A lattice is a  partially ordered set  in which every two elements have a unique least upper bound (supremum) and a unique greatest lower bound (infimum). Let $L$ be a lattice. We call the supremum and the infimum of the two elements $a,b\in L$, the \textit{join} and the \textit{meet} of $a$ and $b$, respectively, and denote them by $a\vee b$ and $a\wedge b$, respectively. If $L$ has the greatest (resp. least) element, then this element is denoted by $\hat{1}$ (resp. $\hat{0}$). A lattice including $\hat{0}$ and $\hat{1}$ is called \textit{bounded}. The lattice $L$ is called \textit{distributive} if $a\wedge(b\vee c)=(a\wedge b)\vee(a\wedge c)$, for all $a,b,c\in L$. Modularity  is a simple generalization of  distributivity. Indeed, the lattice $L$ is called \textit{modular} if $a\wedge(c\vee b)=(a\wedge c)\vee b$, for all $a,b,c\in L$ with $b\le a$. The lattice $L$ including $\hat{0}$ and $\hat{1}$ is called \textit{complemented} if for every $a\in L$ there exists an element $b\in L$, called the complement of $a$, such that $a\wedge b=\hat{0}$ and $a\vee b=\hat{1}$. If in a complemented lattice every element has a unique complement, then the lattice is called \textit{uniquely complemented}. A complemented distributive lattice is called a \textit{Boolean algebra}. We denote  the lattice obtained by removing $\hat{0}$ and $\hat{1}$ (if  exist) by $\overline{L}$. An \textit{atom} of $L$  is an element $a\in L$ such that $b\le a$ implies  $b=a$, for every $b\in\overline{L}$. A lattice whose any of elements is the join of some atoms is called \textit{atomic}. A \textit{relatively atomic} lattice is a lattice in which every interval is atomic. Similarly, a \textit{relatively complemented} lattice is a lattice in which every closed bounded interval $[x,y]$  is complemented. It is obvious that every bounded relatively atomic lattice (resp. bounded relatively complemented lattice) is atomic (resp. complemented). The lattice $L$ including $\hat{0}$ is called \textit{pseudocomplemented} if for every $x\in L$ there exists a greatest element $x^*$ with  $x\wedge x^*=\hat{0}$ (here $x^*$ is unique).

A (finite abstract) \textit{simplicial complex} is a collection $\Delta$ of sets  such that for each $\sigma\in\Delta$ and $\tau\subseteq\sigma$, one has $\tau\in\Delta$. Let $\Delta$ be a simplicial complex. The elements of $\Delta$ are called \textit{faces} or \textit{simplices} of $\Delta$.  Maximal faces of $\Delta$ are called  \textit{facets}. The dimension of a face of $\Delta$ is defined to be the cardinality of the face minus one. The dimension of $\Delta$ is the maximum of the dimensions of its faces. A subcomplex of $\Delta$ is a subset $\Gamma$ of $\Delta$ which is a simplicial complex. 

A  rack  is a set $R$ together with a binary operation $\tg$ such that the following conditions are satisfied: for all $a,b,c\in R$, we have $a\tg(b\tg c)=(a\tg b)\tg(a\tg c)$, and there exists a unique $x\in R$ with $a\tg x=b$. A \textit{quandle} is a rack $Q$ with the additional condition $x\tg x=x$, for all $x\in Q$.

Let $(R,\tg)$ be a rack. A  subrack  of $R$ is a subset $Q$ of $R$ where $(Q,\tg)$ is a rack as well. The set of all subracks of $R$ with the inclusion order is a lattice called the\textit{ lattice of subracks} of $R$, and denoted by $\mathcal{R}(R)$. For a subset $S$ of $R$, the \textit{subrack generated} by $S$ in $R$, denoted by $\ll S\gg$, is the intersection of all subracks of $R$ including $S$. It follows from the definition that  for every $a\in R$, the map $f_a:R\to R$ with $f_a(b)=a\tg b$ is a bijection. An automorphism of $R$ is a function $\phi:R\to R$ such that for all $a,b\in R$, we have $\phi(a\tg b)=\phi(a)\tg\phi(b)$. The \textit{inner group} of $R$, denoted by $\mathrm{Inn}(R)$, is the subgroup generated by $\{f_a:a\in R\}$ in the automorphism group of $R$. Note that $f_a$ is an automorphism of $R$, for every $a\in R$. The inner group of $R$ acts on $R$ by the natural action $\phi*x=\phi(x)$, for all $\phi\in\mathrm{Inn}(R)$ and $x\in R$. The orbits of this action are subracks of $R$. In other words, the orbit including $x\in R$ is the set $\{\phi(x): \phi\in\mathrm{Inn}(R)\}$. Note that when we refer to a group as a rack, we mean that the operation is conjugation.

\section{Complementedness of The Lattice of Subracks of Finite Racks} \label{sec1}

In this section, we discuss Question \ref{Que} for finite racks. Indeed, we show that $\mathcal{R}(R)$ is complemented for every finite rack $R$. We also determine some other structural properties of $\mathcal{R}(R)$. In particular, we give a characterization of finite modular lattices of subracks in terms of complements of the elements  of the underlying racks. The study of Question \ref{Que} in the case of infinite racks is postponed to Section \ref{S} where a positive answer to this question is provided.

\medskip 
The following theorem is one of the most important results of this paper which immediately concludes that the lattice of subracks of a finite rack is complemented.

\begin{Theorem} \label{Thm1}
Let $R$ be a finite rack. Then the following statements hold:
\begin{enumerate}
\item
The intersection of all maximal subracks of $R$ is empty.
\item
For any two subracks $Q_1$ and $Q_2$ of $R$ with $T=\ll Q_1, Q_2\gg$, the subrack $Q_1$ has a complement in $\mathcal{R}(T)$ which is a subset of $Q_2$. 
\end{enumerate}
\end{Theorem}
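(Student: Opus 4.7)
First I would deduce (1) from (2). Applying (2) with $Q_1=\ll\{x\}\gg$ and $Q_2=R$ (so that $T=R$) for an arbitrary $x\in R$ produces a subrack $C\subseteq R$ with $x\notin C$ and $\ll C,x\gg=R$. Using the finiteness of $R$, extend $C$ to a subrack $M$ of $R$ that is maximal among subracks avoiding $x$. If $M\subsetneq S\subsetneq R$ were a subrack, then the maximality of $M$ would force $x\in S$; but then $S\supseteq C\cup\{x\}\supseteq\ll C,x\gg=R$, a contradiction. Hence $M$ is a maximal subrack of $R$ avoiding $x$, and since $x$ was arbitrary, the intersection of all maximal subracks is empty.

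The bulk of the argument lies in (2), which I would prove by induction on $|T\setminus Q_1|$. The base $T=Q_1$ is trivial: take $C=\emptyset\subseteq Q_2$. For the inductive step, $T\setminus Q_1\neq\emptyset$ combined with $T=\ll Q_1,Q_2\gg$ forces $Q_2\not\subseteq Q_1$, so pick $y\in Q_2\setminus Q_1$ and set $Q_1'=\ll Q_1,y\gg$. Then $\ll Q_1',Q_2\gg=T$ and $|T\setminus Q_1'|<|T\setminus Q_1|$, so the inductive hypothesis yields a subrack $C'\subseteq Q_2$ with $C'\cap Q_1'=\emptyset$ and $\ll C',Q_1'\gg=T$. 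The natural candidate for the desired complement is $C=\ll C',y\gg$, generated inside $Q_2$; it satisfies $C\subseteq Q_2$ and $\ll C,Q_1\gg=\ll C',y,Q_1\gg=\ll C',Q_1'\gg=T$ automatically.

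The core difficulty, and the step I expect to be the main obstacle, is verifying $C\cap Q_1=\emptyset$. I would analyse a hypothetical $z\in C\cap Q_1$ of minimal generation depth from $C'\cup\{y\}$. In the benign case where the terminal rack operation placing $z$ into $C$ has $y$ as its right operand, i.e.\ $z=a\tg y$ or $z=a\tg^{-1}y$ with $a\in C'$, the left-invertibility of $\tg$ yields $y=a\tg^{-1}z$ or $y=a\tg z$, so $y\in\ll C',z\gg\subseteq\ll C',Q_1\gg$; combined with $C'\cap Q_1'=\emptyset$ and $y\in Q_1'$, this produces the required contradiction. The subtle cases are those where $y$ is the left operand of the terminal operation, e.g.\ $z=y\tg a$, since a single step no longer lets one solve for $y$. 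Here I would exploit that $f_y$ has finite order on the finite rack $T$: iterating the $f_y$-orbit of $a$ must eventually return to $a\notin Q_1$, so some consecutive pair of iterates straddles $Q_1$, reducing matters to the benign configuration. Small examples, such as the dihedral rack on $\mathbb{Z}/4$ with $Q_1=\{0\}$, show that a naive choice of $y$ can fail, so the induction may need to combine this orbit argument with a judicious choice of $y$; alternatively, one could take $C$ to be a maximal subrack of $Q_2$ disjoint from $Q_1$ and derive $\ll C,Q_1\gg=T$ directly from the bijectivity axiom.
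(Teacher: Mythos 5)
Your reduction of (1) to (2) is valid, and reversing the paper's logical order in that direction is harmless. The problem is that (2), which you rightly identify as carrying all the weight, is not actually proved: the inductive step stalls exactly at the point you flag, namely showing $C\cap Q_1=\emptyset$ for $C=\ll C',y\gg$, and neither of your two escape routes closes the gap. The $f_y$-orbit argument only addresses a single terminal operation of the form $z=y\tg a$ and does not control elements of $\ll C',y\gg$ obtained by longer mixed words in $C'\cup\{y\}$; you concede that the choice of $y$ must be ``judicious'' without giving a rule or proving that a good choice always exists. Worse, the fallback --- take $C$ maximal among subracks of $Q_2$ disjoint from $Q_1$ and hope that $\ll C,Q_1\gg=T$ --- is false: in the quandle $Q=S_3\setminus\{\mathrm{id}\}$ (the paper's own example of a non-relatively-complemented interval), the singleton $\{(1\;3)\}$ is maximal among subracks of $Q$ disjoint from $Q_1=\{(1\;2)\}$, yet $\ll(1\;2),(1\;3)\gg=\{(1\;2),(1\;3),(2\;3)\}\neq Q$. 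So the proposal as written does not constitute a proof of (2).

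The paper's argument avoids this entirely by proving (1) first and independently: every automorphism of $R$ permutes the maximal subracks, so their intersection $X$ is invariant under $\mathrm{Inn}(R)$ and is therefore a union of orbits; $R\setminus X$ is then also a subrack, and if $X\neq\emptyset$ a maximal subrack containing $R\setminus X$ would also contain $X$ and hence equal $R$, a contradiction. For (2) it then runs an extremal argument: minimize $|Q_1\cap Q|$ over subracks $Q\subseteq Q_2$ with $\ll Q_1,Q\gg=T$. If the minimizer $Q_3$ meets $Q_1$ in some $x$, part (1) applied to $Q_3$ yields a maximal subrack $Q_4$ of $Q_3$ avoiding $x$, whence $\ll x,Q_4\gg=Q_3$ and so $\ll Q_1,Q_4\gg=\ll Q_1,x,Q_4\gg=\ll Q_1,Q_3\gg=T$ while $|Q_1\cap Q_4|<|Q_1\cap Q_3|$, contradicting minimality. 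That global minimization is the idea missing from your plan; some such device seems necessary, since (as your own $\mathbb{Z}/4$ example and the $S_3$ computation above show) no purely local or greedy construction of the complement goes through.
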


\begin{proof}
(1) If $R=\emptyset$, then the result is clear. Now, we  assume that $R\neq\emptyset$. Suppose that $\{M_i\}_{i=1}^n$ is the set of all maximal subracks of $R$ and $X=\bigcap_{i=1}^nM_i$. Let $\phi$ be an  automorphism of $R$. It is easily seen  that the map $M_i\mapsto \phi(M_i)$ is a permutation on the set of all maximal subracks of $R$. Therefore, we have 
\[ \phi(X)=\phi\left(\bigcap_{i=1}^nM_i\right)=\bigcap_{i=1}^n\phi(M_i)=\bigcap_{i=1}^nM_i=X,\]
which implies that $X$ is the union of a set of orbits of $R$. Note that $R\backslash X$ is the union of the other orbits, and hence it is a subrack. If $X\neq\emptyset$, then there is a maximal subrack $M$ containing $R\backslash X$. We also have $X\subseteq M$ and hence $M=R$ which is a contradiction.

(2) Let
\[ \mathcal{C}\coloneqq\{|Q_1\cap Q|: \text{$Q$ is a subrack of $Q_2$ and $T=\ll Q_1, Q\gg$}\}.\]
Obviously, $|Q_1\cap Q_2|$ belongs to $\mathcal{C}$, so that $\mathcal{C}$ is nonempty. Assume that $|Q_1\cap Q_3|$ is the minimum element of $\mathcal{C}$ for a subrack $Q_3$ of $Q_2$. We show that $Q_3$ is a complement of $Q_1$ in $\mathcal{R}(T)$. To do this, it is enough to show that  $Q_1\cap Q_3=\emptyset$. Suppose on contrast that  $x\in Q_1\cap Q_3$. By the part (1) the intersection of all maximal subracks of $Q_3$ is  empty, so there exists a maximal subrack $Q_4$ of $Q_3$ which does not contain $x$. Thus by Theorem 2.3 of \cite{SK1}, $Q_4$ is a complement of $\ll x\gg$ in $\mathcal{R}(Q_3)$. Now, we have $|Q_1\cap Q_4|<|Q_1\cap Q_3|$, while 
\[\ll Q_1, Q_4\gg= \ll Q_1, x, Q_4\gg=\ll Q_1, \ll x, Q_4\gg\gg=\ll Q_1, Q_3\gg= T.\] 
But this is  a contradiction, since $|Q_1\cap Q_3|$ is the minimum of $\mathcal{C}$. 
\end{proof}
\begin{Corollary}
The lattice of subracks of every finite rack is complemented.
\end{Corollary}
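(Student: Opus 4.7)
The plan is to obtain the corollary as an immediate specialization of Theorem \ref{Thm1}(2). First I would observe that $\mathcal{R}(R)$ is a bounded lattice: the empty set is vacuously a subrack and serves as $\hat{0}$, while $R$ itself serves as $\hat{1}$. The meet and join operations are intersection and subrack-generation, as recalled in the introduction. In particular, a complement of a subrack $Q_1$ in $\mathcal{R}(R)$ is precisely a subrack $Q$ with $Q_1 \cap Q = \emptyset$ and $\ll Q_1, Q\gg = R$.

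Now, for an arbitrary subrack $Q_1$ of $R$, I would apply part (2) of Theorem \ref{Thm1} with the choice $Q_2 := R$. Since $R$ itself is a subrack, this is legitimate, and we get $T = \ll Q_1, R\gg = R$, so $\mathcal{R}(T) = \mathcal{R}(R)$. The theorem then supplies a subrack $Q \subseteq R$ which is a complement of $Q_1$ in $\mathcal{R}(R)$. As $Q_1$ was arbitrary, every element of $\mathcal{R}(R)$ admits a complement, proving the corollary.

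There is no real obstacle here: the substantive content has already been absorbed into Theorem \ref{Thm1}(2), whose proof encodes the delicate argument (induction on $|Q_1 \cap Q|$ combined with part (1) and a result from \cite{SK1}). The only possible edge case is $R = \emptyset$, where $\mathcal{R}(R) = \{\emptyset\}$ collapses to a single element that is both $\hat{0}$ and $\hat{1}$ and is vacuously its own complement, so the statement holds trivially.
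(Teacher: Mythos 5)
Your argument is exactly the paper's: take $Q_2 = R$ in Theorem \ref{Thm1}(2), note $T = \ll Q_1, R\gg = R$, and read off a complement of $Q_1$ in $\mathcal{R}(R)$. The extra remarks on boundedness and the empty rack are harmless and correct.
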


\begin{proof}
Let $R$ be a rack and $Q$ be a subrack of $R$. Then we have $\ll Q, R\gg=R$, and hence by the part (2) of Theorem \ref{Thm1}, the result follows.
\end{proof}

There is an important relationship between the lattices of subracks of racks and quandles. The \textit{corresponding quandle} of $R$, denoted by $(\bar{R},*)$, is the set of all atoms of $R$ together with the following operation: for any two atoms $A$ and $B$ of $R$ with $a\in A$ and $b\in B$, we have $A*B=C$ where $C$ is the atom which includes $a\tg b$. We may sometimes consider  quandles instead of racks, because the lattice of subracks of a rack and its corresponding quandle are isomorphic, by  \cite[Corollary~2.7]{SK1}.
 

Note that the lattice of subracks of a rack is not necessarily relatively complemented. For  example, let $Q$ be the quandle $S_3$.   Since every subrack of $Q$ including a 2-cycle and a 3-cycle includes all other 2-cycles and 3-cycles, it follows that the interval $\left[(1\quad 2), Q\right]$ is not complemented (see Figure \ref{S3}).

Similarly, the lattice of subracks of a rack is not necessarily relatively atomic. For  example, let $Q$ be the dihedral quandle  $\mathbb{Z}_{8}$ (i.e. $x\tg y=2x-y$ for all $x,y\in\mathbb{Z}_8$). In this quandle, the interval $\left[\{0\}, Q\right]$ is not atomic. Indeed, this interval has only the atom $\{0,4\}$, and hence the subrack $\{0,2,4,6\}$ is not the join of some atoms of this interval. Furthermore, this interval  is not complemented, because every subrack of $Q$ which includes an odd number and an even number must generate  $Q$ (see Figure \ref{Z8}). 

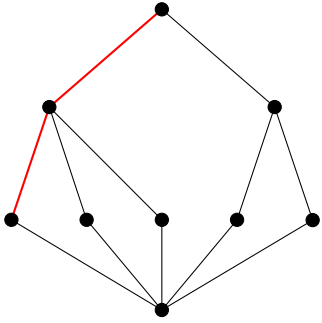
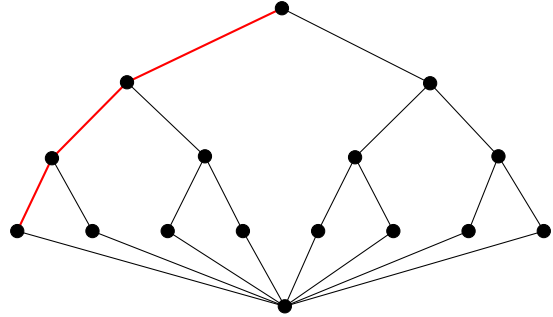
\begin{figure}[h]
\centering
\subfloat[The lattice~$\mathcal{R}(S_3\backslash\{id\})$, and a non-complemented atomic interval of it (shown in red).\label{S3}]{
\definecolor{ffqqqq}{rgb}{1.,0.,0.}
\begin{tikzpicture}[line cap=round,line join=round,>=triangle 45,x=1.0cm,y=1.0cm]
\clip(4.9,-2.2998147694362574) rectangle (9.438965103515084,2.257682708337581);
\draw [line width=0.4pt] (7.,-2.)-- (5.,-0.8);
\draw [line width=0.4pt] (7.,-2.)-- (6.,-0.8);
\draw [line width=0.4pt] (7.,-2.)-- (7.,-0.8);
\draw [line width=0.4pt] (7.,-2.)-- (8.,-0.8);
\draw [line width=0.4pt] (7.,-2.)-- (9.004851665309076,-0.8040825942618293);
\draw [line width=0.4pt] (7.,-0.8)-- (5.50419,0.7);
\draw [line width=0.4pt] (6.,-0.8)-- (5.50419,0.7);
\draw [line width=0.8pt,color=ffqqqq] (5.,-0.8)-- (5.50419,0.7);
\draw [line width=0.4pt] (8.,-0.8)-- (8.5,0.7);
\draw [line width=0.4pt] (9.004851665309076,-0.8040825942618293)-- (8.5,0.7);
\draw [line width=0.8pt,color=ffqqqq] (5.50419,0.7)-- (7.,2.);
\draw [line width=0.4pt] (8.5,0.7)-- (7.,2.);
\draw [line width=0.4pt] (13.55718533921923,-2.0012363055269073)-- (10.,-1.);
\draw [line width=0.4pt] (11.,-1.)-- (13.55718533921923,-2.0012363055269073);
\draw [line width=0.4pt] (13.55718533921923,-2.0012363055269073)-- (12.,-1.);
\draw [line width=0.4pt] (13.,-1.)-- (13.55718533921923,-2.0012363055269073);
\draw [line width=0.4pt] (13.55718533921923,-2.0012363055269073)-- (14.,-1.);
\draw [line width=0.4pt] (13.55718533921923,-2.0012363055269073)-- (15.,-1.);
\draw [line width=0.4pt] (13.55718533921923,-2.0012363055269073)-- (16.,-1.);
\draw [line width=0.4pt] (13.55718533921923,-2.0012363055269073)-- (17.,-1.);
\draw [line width=0.8pt,color=ffqqqq] (10.,-1.)-- (10.461236731966114,-0.031087191820382755);
\draw [line width=0.4pt] (11.,-1.)-- (10.461236731966114,-0.031087191820382755);
\draw [line width=0.4pt] (12.,-1.)-- (12.495351725987788,-0.005500839694323996);
\draw [line width=0.4pt] (12.495351725987788,-0.005500839694323996)-- (13.,-1.);
\draw [line width=0.4pt] (14.,-1.)-- (14.491087191820375,-0.018294015757353375);
\draw [line width=0.4pt] (14.491087191820375,-0.018294015757353375)-- (15.,-1.);
\draw [line width=0.4pt] (16.,-1.)-- (16.397270425211754,-0.005500839694323996);
\draw [line width=0.4pt] (16.397270425211754,-0.005500839694323996)-- (17.,-1.);
\draw [line width=0.8pt,color=ffqqqq] (10.461236731966114,-0.031087191820382755)-- (11.459104464882406,0.9795737171589383);
\draw [line width=0.4pt] (11.459104464882406,0.9795737171589383)-- (12.495351725987788,-0.005500839694323996);
\draw [line width=0.4pt] (14.491087191820375,-0.018294015757353375)-- (15.48895492473667,0.9667805410959089);
\draw [line width=0.4pt] (15.48895492473667,0.9667805410959089)-- (16.397270425211754,-0.005500839694323996);
\draw [line width=0.8pt,color=ffqqqq] (11.459104464882406,0.9795737171589383)-- (13.518805811030141,1.9646482740122007);
\draw [line width=0.4pt] (15.48895492473667,0.9667805410959089)-- (13.518805811030141,1.9646482740122007);
\begin{scriptsize}
\draw [fill=black] (7.,-2.) circle (2.5pt);
\draw [fill=black] (7.,-0.8) circle (2.5pt);
\draw [fill=black] (6.,-0.8) circle (2.5pt);
\draw [fill=black] (5.,-0.8) circle (2.5pt);
\draw [fill=black] (8.,-0.8) circle (2.5pt);
\draw [fill=black] (9.004851665309076,-0.8040825942618293) circle (2.5pt);
\draw [fill=black] (5.50419,0.7) circle (2.5pt);
\draw [fill=black] (8.5,0.7) circle (2.5pt);
\draw [fill=black] (7.,2.) circle (2.5pt);
\draw [fill=black] (10.,-1.) circle (2.5pt);
\draw [fill=black] (11.,-1.) circle (2.5pt);
\draw [fill=black] (12.,-1.) circle (2.5pt);
\draw [fill=black] (13.,-1.) circle (2.5pt);
\draw [fill=black] (14.,-1.) circle (2.5pt);
\draw [fill=black] (15.,-1.) circle (2.5pt);
\draw [fill=black] (16.,-1.) circle (2.5pt);
\draw [fill=black] (17.,-1.) circle (2.5pt);
\draw [fill=black] (13.55718533921923,-2.0012363055269073) circle (2.5pt);
\draw [fill=black] (10.461236731966114,-0.031087191820382755) circle (2.5pt);
\draw [fill=black] (12.495351725987788,-0.005500839694323996) circle (2.5pt);
\draw [fill=black] (14.491087191820375,-0.018294015757353375) circle (2.5pt);
\draw [fill=black] (16.397270425211754,-0.005500839694323996) circle (2.5pt);
\draw [fill=black] (11.459104464882406,0.9795737171589383) circle (2.5pt);
\draw [fill=black] (15.48895492473667,0.9667805410959089) circle (2.5pt);
\draw [fill=black] (13.518805811030141,1.9646482740122007) circle (2.5pt);
\end{scriptsize}
\end{tikzpicture}
}
\qquad
\subfloat[The lattice of subracks of $\mathbb{Z}_8$ as a dihedral rack, and an interval of it (shown in red) which is neither complemented nor atomic.\label{Z8}]{
\definecolor{ffqqqq}{rgb}{1.,0.,0.}
\begin{tikzpicture}[line cap=round,line join=round,>=triangle 45,x=1.0cm,y=1.0cm]
\clip(9.747996143653158,-2.2707398253994224) rectangle (17.456380921672192,2.2867576523744155);
\draw [line width=0.4pt] (7.,-2.)-- (5.,-0.8);
\draw [line width=0.4pt] (7.,-2.)-- (6.,-0.8);
\draw [line width=0.4pt] (7.,-2.)-- (7.,-0.8);
\draw [line width=0.4pt] (7.,-2.)-- (8.,-0.8);
\draw [line width=0.4pt] (7.,-2.)-- (9.004851665309076,-0.8040825942618293);
\draw [line width=0.4pt] (7.,-0.8)-- (5.50419,0.7);
\draw [line width=0.8pt,color=ffqqqq] (6.,-0.8)-- (5.50419,0.7);
\draw [line width=0.4pt] (5.,-0.8)-- (5.50419,0.7);
\draw [line width=0.4pt] (8.,-0.8)-- (8.5,0.7);
\draw [line width=0.4pt] (9.004851665309076,-0.8040825942618293)-- (8.5,0.7);
\draw [line width=0.8pt,color=ffqqqq] (5.50419,0.7)-- (7.,2.);
\draw [line width=0.4pt] (8.5,0.7)-- (7.,2.);
\draw [line width=0.4pt] (13.55718533921923,-2.0012363055269073)-- (10.,-1.);
\draw [line width=0.4pt] (11.,-1.)-- (13.55718533921923,-2.0012363055269073);
\draw [line width=0.4pt] (13.55718533921923,-2.0012363055269073)-- (12.,-1.);
\draw [line width=0.4pt] (13.,-1.)-- (13.55718533921923,-2.0012363055269073);
\draw [line width=0.4pt] (13.55718533921923,-2.0012363055269073)-- (14.,-1.);
\draw [line width=0.4pt] (13.55718533921923,-2.0012363055269073)-- (15.,-1.);
\draw [line width=0.4pt] (13.55718533921923,-2.0012363055269073)-- (16.,-1.);
\draw [line width=0.4pt] (13.55718533921923,-2.0012363055269073)-- (17.,-1.);
\draw [line width=0.8pt,color=ffqqqq] (10.,-1.)-- (10.461236731966114,-0.031087191820382755);
\draw [line width=0.4pt] (11.,-1.)-- (10.461236731966114,-0.031087191820382755);
\draw [line width=0.4pt] (12.,-1.)-- (12.495351725987788,-0.005500839694323996);
\draw [line width=0.4pt] (12.495351725987788,-0.005500839694323996)-- (13.,-1.);
\draw [line width=0.4pt] (14.,-1.)-- (14.491087191820375,-0.018294015757353375);
\draw [line width=0.4pt] (14.491087191820375,-0.018294015757353375)-- (15.,-1.);
\draw [line width=0.4pt] (16.,-1.)-- (16.397270425211754,-0.005500839694323996);
\draw [line width=0.4pt] (16.397270425211754,-0.005500839694323996)-- (17.,-1.);
\draw [line width=0.8pt,color=ffqqqq] (10.461236731966114,-0.031087191820382755)-- (11.459104464882406,0.9795737171589383);
\draw [line width=0.4pt] (11.459104464882406,0.9795737171589383)-- (12.495351725987788,-0.005500839694323996);
\draw [line width=0.4pt] (14.491087191820375,-0.018294015757353375)-- (15.48895492473667,0.9667805410959089);
\draw [line width=0.4pt] (15.48895492473667,0.9667805410959089)-- (16.397270425211754,-0.005500839694323996);
\draw [line width=0.8pt,color=ffqqqq] (11.459104464882406,0.9795737171589383)-- (13.518805811030141,1.9646482740122007);
\draw [line width=0.4pt] (15.48895492473667,0.9667805410959089)-- (13.518805811030141,1.9646482740122007);
\begin{scriptsize}
\draw [fill=black] (7.,-2.) circle (2.5pt);
\draw [fill=black] (7.,-0.8) circle (2.5pt);
\draw [fill=black] (6.,-0.8) circle (2.5pt);
\draw [fill=black] (5.,-0.8) circle (2.5pt);
\draw [fill=black] (8.,-0.8) circle (2.5pt);
\draw [fill=black] (9.004851665309076,-0.8040825942618293) circle (2.5pt);
\draw [fill=black] (5.50419,0.7) circle (2.5pt);
\draw [fill=black] (8.5,0.7) circle (2.5pt);
\draw [fill=black] (7.,2.) circle (2.5pt);
\draw [fill=black] (10.,-1.) circle (2.5pt);
\draw [fill=black] (11.,-1.) circle (2.5pt);
\draw [fill=black] (12.,-1.) circle (2.5pt);
\draw [fill=black] (13.,-1.) circle (2.5pt);
\draw [fill=black] (14.,-1.) circle (2.5pt);
\draw [fill=black] (15.,-1.) circle (2.5pt);
\draw [fill=black] (16.,-1.) circle (2.5pt);
\draw [fill=black] (17.,-1.) circle (2.5pt);
\draw [fill=black] (13.55718533921923,-2.0012363055269073) circle (2.5pt);
\draw [fill=black] (10.461236731966114,-0.031087191820382755) circle (2.5pt);
\draw [fill=black] (12.495351725987788,-0.005500839694323996) circle (2.5pt);
\draw [fill=black] (14.491087191820375,-0.018294015757353375) circle (2.5pt);
\draw [fill=black] (16.397270425211754,-0.005500839694323996) circle (2.5pt);
\draw [fill=black] (11.459104464882406,0.9795737171589383) circle (2.5pt);
\draw [fill=black] (15.48895492473667,0.9667805410959089) circle (2.5pt);
\draw [fill=black] (13.518805811030141,1.9646482740122007) circle (2.5pt);
\end{scriptsize}
\end{tikzpicture}
}
\caption{The lattice of subracks of a rack is not necessarily  relatively complemented or relatively atomic.}
\end{figure}

It is well-known that every complemented modular lattice is relatively complemented. To see this, let $L$ be a complemented modular lattice and $[a,b]$ be an interval in $L$. For an element $c\in[a,b]$, if $c'$ is a complement of $c$ in $L$, then $a\vee(c'\wedge b)$ is a complement of $c$ in $[a,b]$, because by modularity we have $c\wedge\left(a\vee(c'\wedge b)\right)=\left(c\wedge(c'\wedge b)\right)\vee a=a$, and we also have $c\vee\left(a\vee(c'\wedge b)\right)=c\vee(c'\wedge b)=b\wedge(c\vee c')=b$.

A \textit{sublattice} $S$ of $L$ is a subset of $L$ such that the join and the meet of every two elements of $S$ in $L$ belong to $S$. Note that it is a well-known fact that a lattice $L$ is modular if and only if it does not have any sublattice isomorphic to  the lattice in Figure \ref{f1} (see \cite[Theorem 2 of Section 7]{Gra}). As some well-known examples of modular lattices, we refer to the lattice of submodules of a module over a ring, and the lattice of normal subgroups of a group. Since the lattice of subracks is atomic, modular lattice of subracks are geometric lattices. The following theorem provides a relationship between modularity and reducing complements in the lattice of subracks of a finite rack.

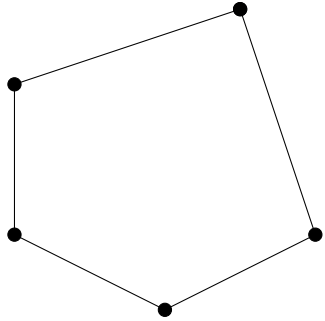
\begin{figure}
\definecolor{ududff}{rgb}{0.30196078431372547,0.30196078431372547,1.}
\begin{tikzpicture}[line cap=round,line join=round,>=triangle 45,x=1.0cm,y=1.0cm]
\clip(0,-2.1747111242953716) rectangle (7.837052219488011,2.22048237520128);
\draw [line width=0.4pt] (4.,-2.)-- (2.,-1.);
\draw [line width=0.4pt] (4.,-2.)-- (6.,-1.);
\draw [line width=0.4pt] (2.,-1.)-- (2.,1.);
\draw [line width=0.4pt] (2.,1.)-- (5.,2.);
\draw [line width=0.4pt] (6.,-1.)-- (5.,2.);
\begin{scriptsize}
\draw [fill=black] (4.,-2.) circle (2.5pt);
\draw [fill=black] (2.,-1.) circle (2.5pt);
\draw [fill=black] (6.,-1.) circle (2.5pt);
\draw [fill=black] (2.,1.) circle (2.5pt);
\draw [fill=ududff] (5.,2.) circle (2.5pt);
\draw [fill=black] (5.,2.) circle (2.5pt);
\end{scriptsize}
\end{tikzpicture}
\caption{The lattice $N_5$ as  the smallest lattice which is not modular.\label{f1}}
\end{figure}

Let $R$ be a finite rack and let $Q$, $R_1$ and $R_2$ be subracks of $R$ with $Q\subseteq R_1\subseteq R_2$. Then we say that a complement $Q'$ of $Q$ in $\mathcal{R}(R_2)$ can be reduced to a complement of $Q$ in $\mathcal{R}(R_1)$, if $Q'\cap R_1$ is a complement of $Q$ in $\mathcal{R}(R_1)$.
\begin{Theorem}\label{Thm2}
Let $R$ be a finite rack. Then $\mathcal{R}(R)$ is modular if and only if for all subracks $Q,R_1$ and $R_2$ of $R$ with $Q\subseteq R_1\subseteq R_2$  every complement of $Q$ in $\mathcal{R}(R_2)$ can be reduced to a complement of $Q$ in $\mathcal{R}(R_1)$. 
\end{Theorem}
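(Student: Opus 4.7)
The plan is to handle the two implications separately. The forward direction is a one-line application of the modular law. The converse I prove by contrapositive, combining the pentagon characterization of non-modular lattices recalled before the theorem with Theorem \ref{Thm1}(2).

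For the forward direction, assume $\mathcal{R}(R)$ is modular, and let $Q \subseteq R_1 \subseteq R_2$ be subracks of $R$ with $Q'$ a complement of $Q$ in $\mathcal{R}(R_2)$. Since $Q \cap Q' = \emptyset$, certainly $Q \cap (Q' \cap R_1) = \emptyset$. Because $Q \subseteq R_1$, the modular law applied to $R_1$, $Q$ and $Q'$ gives $R_1 \cap \ll Q, Q'\gg \, = \, \ll Q, R_1 \cap Q'\gg$. The left-hand side equals $R_1 \cap R_2 = R_1$, so $R_1 = \ll Q, Q' \cap R_1\gg$, which says that $Q' \cap R_1$ is a complement of $Q$ in $\mathcal{R}(R_1)$.

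For the converse I prove the contrapositive. Suppose $\mathcal{R}(R)$ is not modular. By the classical pentagon characterization (Figure \ref{f1}), $\mathcal{R}(R)$ has a sublattice isomorphic to $N_5$: there exist subracks $P_0 \subsetneq P_1 \subsetneq P_2 \subsetneq P_4$ and a subrack $P_3$, incomparable with both $P_1$ and $P_2$, such that $P_3 \cap P_1 = P_3 \cap P_2 = P_0$ and $\ll P_3, P_1\gg \, = \, \ll P_3, P_2\gg \, = P_4$. Set $R_2 = P_4$, $R_1 = P_2$ and $Q = P_1$; then $Q \subseteq R_1 \subseteq R_2$. Applying Theorem \ref{Thm1}(2) with $Q_1 = P_1$ and $Q_2 = P_3$ (so $T = P_4 = R_2$), I obtain a complement $Q'$ of $Q$ in $\mathcal{R}(R_2)$ with $Q' \subseteq P_3$. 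Then $Q' \cap R_1 \subseteq P_3 \cap P_2 = P_0 \subseteq P_1$, and since $Q' \cap P_1 = \emptyset$ by complementarity, I conclude $Q' \cap R_1 = \emptyset$. If $\emptyset$ were a complement of $Q = P_1$ in $\mathcal{R}(R_1) = \mathcal{R}(P_2)$, then $P_2 = \ll P_1, \emptyset\gg \, = P_1$, contradicting $P_1 \subsetneq P_2$. Hence the reduction property fails.

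The main thing to get right is the matching of the five pentagon labels to the triple $(Q, R_1, R_2)$ governed by the reduction property, namely taking $Q$ at the bottom and $R_1$ at the top of the non-trivial chain, with $R_2$ as the top of the whole pentagon; once this is set up, Theorem \ref{Thm1}(2) places a complement of $Q$ inside the ``off-chain'' element $P_3$, and the incomparabilities in $N_5$ force that complement to meet $R_1$ trivially, which makes the rest immediate.
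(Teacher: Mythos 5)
Your proof is correct and follows essentially the same route as the paper: the forward direction is the same one-line application of the modular law to $R_1$, $Q$, $Q'$, and the converse is the paper's argument verbatim up to relabelling (your $P_0,P_1,P_2,P_3,P_4$ are the paper's $Q_1\cap Q_2,\ Q_1,\ Q_3,\ Q_2,\ \ll Q_1,Q_2\gg$, and your $Q'$ is its $Q_2'$ obtained from Theorem \ref{Thm1}(2)). No gaps.
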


\begin{proof}
First, suppose that $\mathcal{R}(R)$ is modular, and $Q,R_1$ and $R_2$ are subracks of $R$ with $Q\subseteq R_1\subseteq R_2$. Let $Q'$ be a complement of $Q$ in $\mathcal{R}(R_2)$. By modularity, we have $R_1\cap\ll Q',Q\gg=\ll R_1\cap Q',Q\gg$. It follows that $R_1=\ll R_1\cap Q',Q\gg$, and hence $R_1\cap Q'$ is a complement of $Q$ in $\mathcal{R}(R_1)$. 

Conversely, suppose that we can reduce every complement of $Q$ in $\mathcal{R}(R_2)$ to a complement of $Q$ in $\mathcal{R}(R_1)$ for every subrack $Q$ of $R_2$. If $\mathcal{R}(R)$ is not modular, then $\mathcal{R}(R)$ has  a  sublattice   as shown in  Figure \ref{NM}.
\begin{figure}
\centering
\subfloat[The lattice $N_5$ in a non-modular lattice of subracks.\label{NM}]{
\definecolor{ududff}{rgb}{0.30196078431372547,0.30196078431372547,1.}
\begin{tikzpicture}[line cap=round,line join=round,>=triangle 45,x=1.0cm,y=1.0cm]
\clip(1.3,-2.55) rectangle (7,3);
\draw [line width=0.4pt] (4.,-2.)-- (2.,-1.);
\draw [line width=0.4pt] (4.,-2.)-- (6.,-1.);
\draw [line width=0.4pt] (2.,-1.)-- (2.,1.);
\draw [line width=0.4pt] (2.,1.)-- (5.,2.);
\draw [line width=0.4pt] (6.,-1.)-- (5.,2.);
\draw (3.15,-2.0354248486414646) node[anchor=north west] {$Q_1\cap Q_2$};
\draw (1.2,-0.6) node[anchor=north west] {$Q_1$};
\draw (6.126431693679114,-0.6) node[anchor=north west] {$Q_2$};
\draw (1.2,1.5) node[anchor=north west] {$Q_3$};
\draw (3.75,2.8) node[anchor=north west] {$\ll Q_1, Q_2\gg$};
\begin{scriptsize}
\draw [fill=black] (4.,-2.) circle (2.5pt);
\draw [fill=black] (2.,-1.) circle (2.5pt);
\draw [fill=black] (6.,-1.) circle (2.5pt);
\draw [fill=black] (2.,1.) circle (2.5pt);
\draw [fill=ududff] (5.,2.) circle (2.5pt);
\draw [fill=black] (5.,2.) circle (2.5pt);
\end{scriptsize}
\end{tikzpicture}
}
\qquad
\subfloat[The lattice $N_5$  which has an empty bottom element.\label{NM1}]{
\definecolor{ududff}{rgb}{0.30196078431372547,0.30196078431372547,1.}
\begin{tikzpicture}[line cap=round,line join=round,>=triangle 45,x=1.0cm,y=1.0cm]
\clip(1.37,-2.55) rectangle (7,3);
\draw [line width=0.4pt] (4.,-2.)-- (2.,-1.);
\draw [line width=0.4pt] (4.,-2.)-- (6.,-1.);
\draw [line width=0.4pt] (2.,-1.)-- (2.,1.);
\draw [line width=0.4pt] (2.,1.)-- (5.,2.);
\draw [line width=0.4pt] (6.,-1.)-- (5.,2.);
\draw (3.76,-2.0354248486414646) node[anchor=north west] {$\emptyset$};
\draw (1.2,-0.6) node[anchor=north west] {$Q_1$};
\draw (6.126431693679114,-0.6) node[anchor=north west] {$Q_2'$};
\draw (1.2,1.5) node[anchor=north west] {$Q_3$};
\draw (3.75,2.8) node[anchor=north west] {$\ll Q_1, Q_2'\gg$};
\begin{scriptsize}
\draw [fill=black] (4.,-2.) circle (2.5pt);
\draw [fill=black] (2.,-1.) circle (2.5pt);
\draw [fill=black] (6.,-1.) circle (2.5pt);
\draw [fill=black] (2.,1.) circle (2.5pt);
\draw [fill=ududff] (5.,2.) circle (2.5pt);
\draw [fill=black] (5.,2.) circle (2.5pt);
\end{scriptsize}
\end{tikzpicture}
}
\caption{One can construct the right  sublattice from the left one.}
\end{figure}
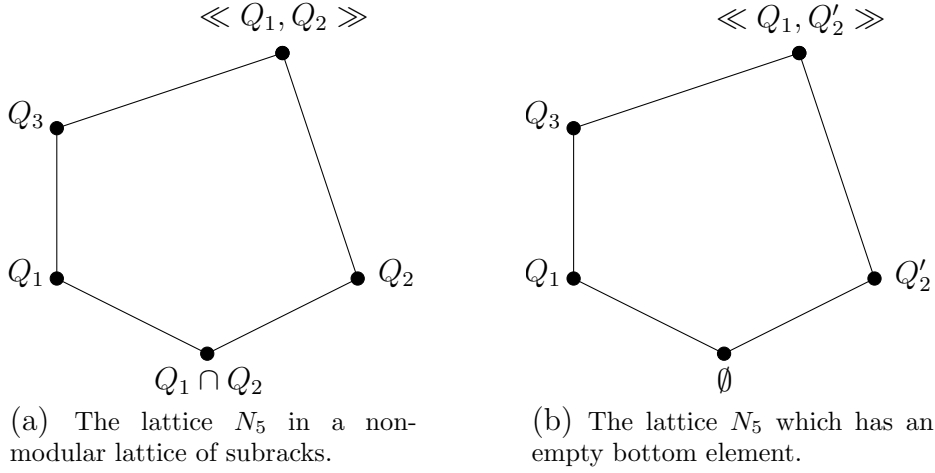
It follows from the part (2) of Theorem \ref{Thm1} that there exists a subrack $Q'_2$ of $Q_2$ which is a complement of $Q_1$ in $\mathcal{R}(T)$ with $T=\ll Q_1,Q_2\gg$, and hence $Q'_2\cap Q_1=\emptyset$ and $T=\ll Q_1, Q'_2\gg$. We have $Q'_2\cap Q_3\subseteq Q_2\cap Q_3=Q_2\cap Q_1$, and hence $Q'_2\cap Q_3=\emptyset$ because the intersection of $Q_1$ and $Q'_2$ is empty. We therefore obtain the  sublattice of $\mathcal{R}(R)$ shown in Figure~\ref{NM1}. By our assumption, we can reduce $Q'_2$ to  a complement of $Q_1$ in $\mathcal{R}(Q_3)$, namely $Q'_2\cap Q_3$ is a complement of $Q_1$ in $\mathcal{R}(Q_3)$. But this is impossible because $Q'_2\cap Q_3$ is  empty, a contradiction. Therefore $\mathcal{R}(R)$ is a modular lattice.
\end{proof}

\section{$G$-Racks}\label{G}

In this section, we introduce a certain class of racks called  \textit{$G$-racks}, (here "$G$" stands for  "Group", motivated by the fact that this class of racks includes all finite groups). We say that a rack $R$ is a $G$-rack if $R$ is the only subrack of $R$ which has a nonempty intersection with every orbit of $R$. By  \cite[Lemma 2.8]{Wel}, all finite groups are $G$-racks. Note that the orbits of a group  are its conjugacy classes. An example of a rack whose lattice of subracks is not distributive was introduced in \cite{SK1}. We call such a rack a \textit{$P$-rack}, and then  we show that $\mathrm{Inn}(R)$ is an abelian group if and only if $R$ is a $P$-rack for all racks $R$. Next, we give an example of an infinite group which is not a $G$-rack. Finally, we provide some equivalent properties to being a Boolean algebra for $G$-racks.

\medskip 
Now, we introduce $P$-racks. Let $R$ be a set, and  $\{R_i\}_{i\in I}$  be a partition of $R$. Assume that $\{f_i\}_{i\in I}$ is a family of permutations on $R$ such that $f_i(R_j)=R_j$ and $f_if_j=f_jf_i$ for all $i,j\in I$. If $x\tg y=f_i(y)$  for all $x,y\in R$ where $x\in R_i$, then $(R,\tg)$ is a rack  which we call a $P$-rack (here "P" comes from "Partition"). This rack is a quandle if and only if for each $i\in I$ the restriction of $f_i$ to $R_i$ is the identity. We denote such a $P$-rack by $\left(R,\{R_i\}_{i\in I},\{f_i\}_{i\in I}\right)$. See also \cite[Example 2.9]{SK1}.

In the next proposition, we show that any $P$-rack is a  $G$-rack.

\begin{Proposition}
All $P$-racks are $G$-racks.
\end{Proposition}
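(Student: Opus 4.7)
The plan is to unpack the $P$-rack structure and show that any subrack meeting every orbit must be stable under every $f_i$ and therefore under the whole inner group, forcing it to equal $R$.

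First, I would record the standard observation that a subrack is closed under the inverse maps $f_a^{-1}$ as well. Indeed, if $Q$ is a subrack of $R$ and $a,b\in Q$, then $(Q,\tg)$ being a rack produces a unique $x\in Q$ with $a\tg x=b$; since such an $x$ is unique already in $R$, it must coincide with $f_a^{-1}(b)$. Hence $f_a(Q)\subseteq Q$ and $f_a^{-1}(Q)\subseteq Q$ for every $a\in Q$.

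Next, I would exploit the defining feature of a $P$-rack $\bigl(R,\{R_i\}_{i\in I},\{f_i\}_{i\in I}\bigr)$: the map $f_a$ depends only on the block $R_i$ containing $a$, since $a\tg y=f_i(y)$ whenever $a\in R_i$. Consequently, the inner group $\mathrm{Inn}(R)$ is generated by $\{f_i:i\in I\}$, and the invariance $f_j(R_i)=R_i$ makes each block $R_i$ a union of $\mathrm{Inn}(R)$-orbits. Now suppose $Q$ is a subrack of $R$ that meets every orbit of $R$. Because each $R_i$ is a union of orbits, $Q\cap R_i\neq\emptyset$ for every $i\in I$. Picking any $x_i\in Q\cap R_i$, the previous paragraph shows that $Q$ is closed under both $f_{x_i}=f_i$ and $f_{x_i}^{-1}=f_i^{-1}$. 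Hence $Q$ is stable under every generator of $\mathrm{Inn}(R)$ and so is an $\mathrm{Inn}(R)$-invariant subset of $R$.

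Finally, I would finish by a short orbit argument: if $\mathcal{O}$ is any orbit of $R$, then $Q\cap\mathcal{O}$ is a nonempty $\mathrm{Inn}(R)$-invariant subset of the orbit $\mathcal{O}$, and must therefore be all of $\mathcal{O}$. Taking the union over all orbits yields $Q=R$, which is exactly the $G$-rack condition.

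I do not anticipate a real obstacle here; the argument is essentially a translation between the $P$-rack data and the $\mathrm{Inn}(R)$-action. The only point that needs a small amount of care is the closure of $Q$ under $f_i^{-1}$, which is why I would highlight it explicitly at the outset.
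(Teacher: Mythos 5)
Your proof is correct and follows essentially the same route as the paper: both arguments use that each block $R_i$ is a union of orbits, so a subrack $Q$ meeting every orbit contains a representative of every block, and since all elements of a block act identically, $Q$ is stable under the full inner group and hence contains every orbit it meets. Your explicit remark that subracks are closed under the inverse maps $f_a^{-1}$ is a welcome bit of care that the paper leaves implicit.
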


\begin{proof}
Let $\left(R,\{R_i\}_{i\in I},\{f_i\}_{i\in I}\right)$ be a $P$-rack. Suppose that $\mathcal{O}$ is the collection of all orbits of $R$, and $Q$ is a subrack of $R$ such that $Q\cap X\neq\emptyset$ for every $X\in\mathcal{O}$. Note that for each orbit $X$ of $R$, there is a unique $i\in I$ for which $X\subseteq R_i$.  For an  element $x\in R$, let $X$ be the orbit including $x$. Let $X\subseteq R_i$ for some $i\in I$, and $y\in Q\cap X$. There is an element $\phi=f_{a_1}\cdots f_{a_l}$ in $\mathrm{Inn}(R)$ for which $\phi(y)=x$. Note that for each $j\in I$, the elements of $R_j$ have the same action on the other elements of $R$. Therefore by the assumptions, we have $\phi\in\mathrm{Inn}(Q)$, and hence $x\in Q$. It follows that $R$ is a $G$-rack.
\end{proof}

In the next theorem, we characterize all racks with  abelian inner groups. Indeed, we show that such racks are exactly the same as the $P$-racks.
\begin{Theorem}
Let $R$ be a rack. Then $\mathrm{Inn}(R)$ is an abelian group if and only if $R$ is a $P$-rack. 
\end{Theorem}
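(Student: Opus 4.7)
The proof rests on the standard rack identity $f_{a\tg b}=f_a f_b f_a^{-1}$, which follows directly from self-distributivity: applying both sides of $a\tg(b\tg c)=(a\tg b)\tg(a\tg c)$ to the map $c\mapsto\cdot$ gives $f_a f_b=f_{a\tg b}f_a$. I will use this identity throughout.

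For the easy direction, assume $R=\left(R,\{R_i\}_{i\in I},\{f_i\}_{i\in I}\right)$ is a $P$-rack. Then by definition every $a\in R_i$ produces the same map $f_a=f_i$, so $\mathrm{Inn}(R)$ is generated by $\{f_i\}_{i\in I}$, a family of pairwise commuting permutations. Hence $\mathrm{Inn}(R)$ is abelian. This takes essentially one line.

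For the converse, suppose $\mathrm{Inn}(R)$ is abelian. The plan is to produce the partition directly from the rack itself. Define $a\sim b$ iff $f_a=f_b$; this is an equivalence relation, and I let $\{R_i\}_{i\in I}$ be the family of equivalence classes. For each $i\in I$, set $f_i:=f_a$ for any $a\in R_i$; this is well-defined, and the collection $\{f_i\}_{i\in I}$ pairwise commutes because $\mathrm{Inn}(R)$ is abelian. By construction the rack operation satisfies $x\tg y=f_a(y)=f_i(y)$ whenever $x\in R_i$, so the formula defining a $P$-rack is automatically matched.

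The only nontrivial verification is that $f_i(R_j)=R_j$ for all $i,j$. Here the commutativity identity is decisive: for any $a,b\in R$,
\[
f_{a\tg b}=f_a f_b f_a^{-1}=f_b,
\]
so $a\tg b\sim b$, i.e.\ $f_a$ maps each class $R_j$ into itself. Since $f_a$ is a bijection of $R$ and every class is mapped into itself, the restriction $f_a|_{R_j}$ is a bijection of $R_j$, giving $f_i(R_j)=R_j$. Thus $\left(R,\{R_i\}_{i\in I},\{f_i\}_{i\in I}\right)$ is a $P$-rack. The step I expect to require the most care is exhibiting the right partition; once one guesses to use $f_a=f_b$ as the equivalence, everything else follows mechanically from the self-distributivity identity and the assumed commutativity of $\mathrm{Inn}(R)$.
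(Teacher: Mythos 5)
Your proof is correct and follows essentially the same route as the paper: both directions use the equivalence $a\sim b\iff f_a=f_b$ together with the identity $f_{a\tg b}=f_af_bf_a^{-1}$ to show the classes form the required partition. The only cosmetic difference is that the paper checks $f_{f_x^{-1}(y)}=f_y$ explicitly to get $f_i(R_j)=R_j$, whereas you deduce surjectivity of the restriction from $f_a$ being a bijection of $R$ preserving each block; both are valid.
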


\begin{proof}
By definition of a $P$-rack, it is obvious that the inner group of a $P$-rack is abelian. Conversely, assume that $R$ is a rack with  abelian inner group. We define the equivalence relation $\sim$ on $R$ as the following: for all $x,y\in R$, we have $x\sim y$ if and only if $f_x=f_y$. Now, we show that the partition given by these equivalence classes is a desired partition as in the definition of a $P$-rack. It is enough to show that for each $x,y\in R$, the elements $f_x(y)$, $f_x^{-1}(y)$ and $y$ belong to a same class, namely $f_{f_x(y)}=f_{f_x^{-1}(y)}=f_y$. The latter equalities hold, since we have $f_{f_x(y)}=f_xf_yf_x^{-1}=f_y$ and $f_{f_x^{-1}(y)}=f_x^{-1}f_yf_x=f_y$ by \cite[Lemma 2.1]{SK1}, and  $\mathrm{Inn}(R)$ is abelian by our assumption. 
\end{proof}

The following lemma plays an important role in the sequel.
\begin{Lemma}\label{Lemma}
Any rack is a $G$-rack if and only if its corresponding quandle is a $G$-rack.
\end{Lemma}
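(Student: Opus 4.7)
The plan is to transfer the $G$-rack condition across the lattice isomorphism $\mathcal{R}(R)\iso\mathcal{R}(\bar R)$ supplied by \cite[Corollary~2.7]{SK1}. That isomorphism (implicit in the construction of $\bar R$ as the set of atoms of $R$) matches a subrack $Q\subseteq R$ with the set $\mathcal{S}(Q)=\{A\in\bar R:A\subseteq Q\}$ of atoms it contains, with inverse $\mathcal{S}\mapsto\bigcup_{A\in\mathcal{S}}A$. The $G$-rack property is phrased entirely in terms of orbits and subracks, so once I show the isomorphism also induces a bijection between orbits, I am done.

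The first step is the observation that \emph{every orbit of $R$ is a union of atoms}: orbits are subracks (this is stated in the preliminaries), and for any $x$ in an orbit $\mathcal{O}$, the atom $\ll x\gg$ is the smallest subrack containing $x$, hence $\ll x\gg\subseteq\mathcal{O}$. This lets me define, for each $R$-orbit $\mathcal{O}$, the set $\bar{\mathcal{O}}=\{A\in\bar R:A\subseteq\mathcal{O}\}\subseteq\bar R$. The second step is to check that $\mathcal{O}\mapsto\bar{\mathcal{O}}$ is a bijection between the orbits of $R$ and those of $\bar R$. Unwinding the operation on $\bar R$, the map $f_A$ on $\bar R$ sends an atom $B$ to the atom containing $a\tg b$ for any $a\in A,b\in B$; hence a composition of $f_A$'s and their inverses on $\bar R$ evaluated at an atom $B\ni b$ is precisely the atom of $R$ containing $\phi(b)$ for the corresponding element $\phi\in\mathrm{Inn}(R)$. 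Therefore the $\mathrm{Inn}(\bar R)$-orbit of $A$ is exactly the collection of atoms lying inside the $\mathrm{Inn}(R)$-orbit of any $a\in A$, confirming the correspondence.

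The third step is a one-line compatibility check: a subrack $Q\subseteq R$ meets an orbit $\mathcal{O}$ precisely when there is some $x\in Q\cap\mathcal{O}$, which (using that both $Q$ and $\mathcal{O}$ are unions of atoms) is equivalent to $\ll x\gg\in\mathcal{S}(Q)\cap\bar{\mathcal{O}}$, i.e.\ to $\mathcal{S}(Q)\cap\bar{\mathcal{O}}\neq\emptyset$. Since moreover $Q=R$ iff $\mathcal{S}(Q)=\bar R$, we conclude: $Q$ is a subrack of $R$ meeting every orbit of $R$ and different from $R$ iff $\mathcal{S}(Q)$ is a subrack of $\bar R$ meeting every orbit of $\bar R$ and different from $\bar R$. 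This gives both directions of the equivalence at once.

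The main thing to watch is the orbit-matching in step two: one has to be careful that the generators of $\mathrm{Inn}(\bar R)$ really cover all orbit movements arising from $\mathrm{Inn}(R)$, and vice versa; both directions follow cleanly from the defining identity $f_A(B)=\{a\tg b\text{'s atom}\}$ together with the fact that orbits of $R$ are closed under $\ll\cdot\gg$. Everything else is a direct translation through the lattice isomorphism of \cite{SK1}.
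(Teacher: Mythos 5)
Your proposal is correct and follows essentially the same route as the paper: both transfer the $G$-rack condition through the lattice isomorphism $Q\mapsto\overline{Q}$ of \cite[Corollary~2.7]{SK1} and the induced bijection between orbits of $R$ and orbits of $\overline{R}$. The only difference is that you verify the orbit correspondence explicitly by unwinding $f_A$ on atoms, whereas the paper attributes it directly to the cited corollary; this is a matter of detail, not of method.
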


\begin{proof}
Let $(R,\tg)$ be a rack, and let $\overline{x}$ be the atom including $x$ for any $x\in R$.  For any subrack $Q$ of $R$, let $\overline{Q}=\{\overline{x}:x\in Q\}$. Then by \cite[Corollary 2.7]{SK1}, the map $Q\mapsto\overline{Q}$ defines a lattice isomorphism from $\mathcal{R}(R)$ to $\mathcal{R}(\overline{R})$ mapping the orbits of $R$ to the orbits of $\overline{R}$.

Now, let $R$ be a $G$-rack, and let $\{X_i\}_{i\in I}$ be the set of all orbits of $R$. Then $\{\overline{X_i}\}_{i\in I}$ is the set of all orbits of $\overline{R}$. We show that $\overline{R}$ is a $G$-rack. Let $\overline{Q}$ be a subrack of $\overline{R}$ such that $\overline{Q}\cap \overline{X_i}\neq\emptyset$ for any $i\in I$. Then $Q\cap X_i\neq\emptyset$ for any $i\in I$, since the map is one-to-one and $\overline{\emptyset}=\emptyset$. It follows that $Q=R$, since $R$ is a $G$-rack, and hence $\overline{Q}=\overline{R}$. Thus $\overline{R}$ is a $G$-rack as well. The converse follows similarly.
\end{proof}

An \textit{orthocomplemented} lattice  is a complemented lattice $L$ equipped with a function $\phi:L\to L$ such that $\phi^2=id$, $\phi(x)$ is a complement of $x$ in $L$, and $\phi(y)\le\phi(x)$ for all $x,y\in L$ with $x\le y$.

 In the next theorem, we characterize modular, relatively complemented and orthocomplemented  $G$-racks. 
 
 \begin{Theorem}\label{Prop}
 Let $R$ be a $G$-rack. Then the following statements are equivalent:
 \begin{enumerate}
 \item
 $\mathcal{R}(R)$ is a Boolean algebra.
 \item
 $\mathcal{R}(R)$ is modular.
 \item
 $\mathcal{R}(R)$ is relatively complemented.
 \item
 $\mathcal{R}(R)$ is orthocomplemented.
 \end{enumerate}
 \end{Theorem}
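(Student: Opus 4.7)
The implications $(1)\Rightarrow(2)$, $(1)\Rightarrow(3)$ and $(1)\Rightarrow(4)$ are general lattice-theoretic facts that I would dispatch first. Every Boolean algebra is distributive and hence modular, giving $(1)\Rightarrow(2)$; it is both complemented and modular, so by the argument recalled just before Theorem \ref{Thm2} it is relatively complemented, giving $(1)\Rightarrow(3)$; and the canonical complementation map is an involutive, inclusion-reversing choice of complement, giving $(1)\Rightarrow(4)$. The substance of the theorem therefore lies in the converses, where the $G$-rack hypothesis must be used.

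For the converses my plan is to first reduce to the quandle case. By Lemma \ref{Lemma}, $R$ is a $G$-rack iff its corresponding quandle $\overline{R}$ is, and by \cite[Corollary~2.7]{SK1} there is a lattice isomorphism $\mathcal{R}(R)\cong\mathcal{R}(\overline{R})$ that maps orbits to orbits; this isomorphism manifestly preserves Booleanness, modularity, relative complementedness and orthocomplementedness, so I may assume that $R$ is itself a quandle. In that case every singleton $\{x\}$ is a subrack, and the atoms of $\mathcal{R}(R)$ are exactly these singletons. The central claim I would aim to establish is that each of (2), (3), (4) forces the quandle operation on $R$ to be \emph{trivial}, i.e.\ $x\tg y=y$ for all $x,y\in R$; equivalently, every orbit of the inner-group action is a singleton. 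Once this is proved, every subset of $R$ is a subrack, so $\mathcal{R}(R)=2^R$ is Boolean, and (1) follows.

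To prove triviality I would argue by contradiction. Suppose some orbit $X$ of the inner action on $R$ has $|X|\ge 2$, and set $Y:=\bigcup_{X'\ne X}X'$, which is automatically a subrack. By the $G$-rack hypothesis, any subrack strictly containing $Y$ meets every orbit and therefore coincides with $R$; so $Y$ is a maximal proper subrack avoiding $X$. The plan is to contradict this maximality in each case. Under (3), I would apply relative complementedness inside the interval $[Y,R]$ to the subrack $\ll Y,x\gg=R$ for $x\in X$, obtaining a proper subrack $Z\in[Y,R]$ with $Z\ne R$ and $Z\cap X\ne\emptyset$; combined with the maximality of $Y$ this collapses to $|X|=1$. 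Under (2), $\mathcal{R}(R)$ is complemented by the Corollary after Theorem \ref{Thm1}, and a complemented modular lattice is relatively complemented, reducing to case (3). Under (4), the orthocomplementation $\phi$ sends the chain $Y\subsetneq R$ to an inclusion-reversed chain of subracks, and forces $\phi(\{x\})$ for $x\in X$ to be a subrack disjoint from $\{x\}$ that together with $\{x\}$ generates $R$; the $G$-rack property combined with the involutivity of $\phi$ then pins down $\phi(\{x\})=R\setminus\{x\}$, making $R\setminus\{x\}$ a subrack for every $x$, which is equivalent to triviality of the quandle.

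The main obstacle is the analysis inside $[Y,R]$: I have to show that when $|X|\ge 2$ the lattice hypothesis genuinely produces a subrack strictly between $Y$ and $R$ that avoids some element of $X$, and this requires a careful description, via the $G$-rack condition, of which subsets of $X$ can be adjoined to $Y$ to yield a subrack of $R$. This is the technical heart of the argument, analogous in spirit to the minimality trick used in the proof of Theorem \ref{Thm1}(2), and it is where the full strength of $G$-rack-ness is deployed. The orthocomplementation case (4) is likely the most delicate, since it demands tracking the global order-reversing map $\phi$ on non-orbit subracks rather than just on intervals.
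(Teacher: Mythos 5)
Your overall strategy---reduce to the corresponding quandle via Lemma \ref{Lemma} and show that each of (2), (3), (4) forces every orbit to be a singleton---is exactly the paper's. But the technical core of your argument for (3) (and hence for (2), which you reduce to it) does not work, and you have in fact flagged the obstacle yourself without noticing that it is fatal. If $X$ is an orbit with $|X|\ge 2$ and $Y=R\setminus X$, then by the very definition of a $G$-rack there is \emph{no} subrack strictly between $Y$ and $R$: any subrack strictly containing $Y$ meets every orbit and hence equals $R$. So the interval $[Y,R]$ is the two-element lattice $\{Y,R\}$, relative complementedness there is vacuous, and no complement taken inside it can produce a proper subrack $Z$ with $Z\cap X\neq\emptyset$; the only proper element of $[Y,R]$ is $Y$ itself, which is disjoint from $X$. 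The paper works in a different interval: for (3) it takes a complement $T$ of $X$ in $[\{x\},R]$, notes that $\ll T,X\gg=R$ forces $T$ to meet every orbit other than $X$ while $x\in T$ makes it meet $X$ as well, so $T=R$ by $G$-rackness and $X=T\cap X=\{x\}$. For (2) the paper does not pass through relative complementedness at all (your appeal to the Corollary of Theorem \ref{Thm1} is in any case restricted to finite racks, whereas Theorem \ref{Prop} is not); it directly exhibits the sublattice $N_5$ on $\emptyset,\{x\},X,R\setminus X,R$, which exists precisely when $|X|\ge 2$, contradicting modularity.

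Your case (4) is closer in spirit to the paper's, but its decisive step is asserted rather than proved: you claim that $G$-rackness and involutivity ``pin down'' $\phi(\{x\})=R\setminus\{x\}$, yet when $|X|\ge 2$ one can only conclude $\phi(\{x\})\subseteq R\setminus X\subsetneq R\setminus\{x\}$, so the asserted identity already presupposes the conclusion $X=\{x\}$. The paper's route is: $\phi(X)\subseteq\phi(\{x\})$ and $\phi(X)$ meets every orbit except $X$; if $\phi(\{x\})$ also met $X$ it would equal $R$ by $G$-rackness, contradicting $\phi(\{x\})\wedge\{x\}=\emptyset$; hence $\phi(\{x\})\subseteq R\setminus X$, so $\phi(R\setminus X)\subseteq\phi^{2}(\{x\})=\{x\}$, giving $\phi(R\setminus X)=\{x\}$, and running the same argument for any $y\in X$ yields $\{y\}=\{x\}$. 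You would need to supply an argument of this kind to close (4), and replace the $[Y,R]$ analysis entirely to close (2) and (3).
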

 
\begin{proof}
It is a well-known fact that (1) implies the other statements (see \cite[Section~6]{Gra}), and hence it is enough to show that each of  (2), (3) and (4) implies (1). By Lemma \ref{Lemma}, without loss of generality, we may assume that $R$ is a quandle. To see that $\mathcal{R}(R)$ is a Boolean algebra, it is enough to show that every orbit of $R$ is a singleton.  Let  $X$ be  an orbit of $R$ and $x\in X$. We can assume that $R$ has some orbits other than $X$, since otherwise, by definition of a $G$-rack, it follows that $R=\{x\}=X$, and hence there is nothing to prove. Note that if for a subrack $Q$ of $R$, we have $\ll X,Q\gg=R$, then $Q$ has a nonempty intersection with every orbit $Y\neq X$. Indeed, if  there is an orbit $Y\neq X$ with $Y\cap Q=\emptyset$, then $\ll X, Q\gg\subseteq R\backslash Y$, a contradiction. In particular, each complement of $X$ in $\mathcal{R}(R)$ has a nonempty intersection with every orbit of $R$ except $X$.  Now we consider the following cases:

First, suppose that $\mathcal{R}(R)$ is modular. If $X\neq\{x\}$, then $\mathcal{R}(R)$ has the sublattice $N_5$ shown in Figure \ref{f3}. Hence $\mathcal{R}(R)$ is not modular which is a contradiction. 

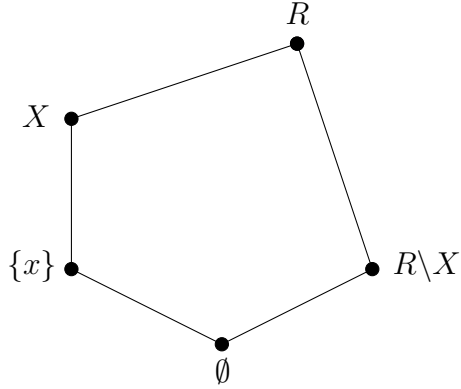
\begin{figure}
\definecolor{ududff}{rgb}{0.30196078431372547,0.30196078431372547,1.}
\begin{tikzpicture}[line cap=round,line join=round,>=triangle 45,x=1.0cm,y=1.0cm]
\clip(0,-2.55) rectangle (8,3);
\draw [line width=0.4pt] (4.,-2.)-- (2.,-1.);
\draw [line width=0.4pt] (4.,-2.)-- (6.,-1.);
\draw [line width=0.4pt] (2.,-1.)-- (2.,1.);
\draw [line width=0.4pt] (2.,1.)-- (5.,2.);
\draw [line width=0.4pt] (6.,-1.)-- (5.,2.);
\draw (3.76,-2.0354248486414646) node[anchor=north west] {$\emptyset$};
\draw (1,-0.6) node[anchor=north west] {$\{x\}$};
\draw (6.126431693679114,-0.6) node[anchor=north west] {$R\backslash X$};
\draw (1.2,1.33) node[anchor=north west] {$X$};
\draw (4.7,2.7) node[anchor=north west] {$R$};
\begin{scriptsize}
\draw [fill=black] (4.,-2.) circle (2.5pt);
\draw [fill=black] (2.,-1.) circle (2.5pt);
\draw [fill=black] (6.,-1.) circle (2.5pt);
\draw [fill=black] (2.,1.) circle (2.5pt);
\draw [fill=ududff] (5.,2.) circle (2.5pt);
\draw [fill=black] (5.,2.) circle (2.5pt);
\end{scriptsize}
\end{tikzpicture}
\caption{Nontrivial $G$-racks do not have the sublattice $N_5$. \label{f3}}
\end{figure}

Next, suppose that $\mathcal{R}(R)$ is relatively complemented. Consider the interval $[\{x\},R]$. Suppose that $T$ is  a complement of $X$ in the interval $[\{x\},R]$, and hence $T\cap X=\{x\}$ and $T$ has a nonempty intersection with every orbit of $R$. This implies that $T=R$, since $R$ is a $G$-rack. Now, it follows from $T\cap X=\{x\}$ that $X=\{x\}$. 

Finally, suppose that $\mathcal{R}(R)$ is orthocomplemented. Let $\phi:\mathcal{R}(R)\to\mathcal{R}(R)$ be a desired function as in the definition. Note that $\phi(X)$ has a nonempty intersection with every orbit of $R$ except $X$, since $\phi(X)$ is a complement of $X$ in $R$. We have $\phi(X)\subseteq\phi(\{x\})$, and hence $\phi(\{x\})$ has a nonempty intersection with every orbit of $R$ except $X$, as well. If $\phi(\{x\})\nsubseteq R\backslash X$, then $\phi(\{x\})$ has a nonempty intersection with every orbit of $R$, and hence $\phi(\{x\})=R$, since $R$ is a $G$-rack. But this is a contradiction, since  $\phi(\{x\})$ is a complement of $\{x\}$ in $\mathcal{R}(R)$. Therefore, we have $\phi(\{x\})\subseteq R\backslash X$, and hence $\phi(R\backslash X)\subseteq \phi^2(\{x\})=\{x\}$. This implies that $\phi(R\backslash X)=\{x\}$. Repeating the above arguments, for any $y\in X$, shows that $\phi(R\backslash X)=\{y\}=\{x\}$. Thus $X=\{x\}$.
 \end{proof}
 
 \begin{figure}[h]
\centering
\subfloat[The lattice $N_5$ in $\mathcal{R}(S_3)$.\label{NMM}]{
\definecolor{ududff}{rgb}{0.30196078431372547,0.30196078431372547,1.}
\begin{tikzpicture}[line cap=round,line join=round,>=triangle 45,x=1.0cm,y=1.0cm]
\clip(1.3,-2.55) rectangle (7,3);
\draw [line width=0.4pt] (4.,-2.)-- (2.,-1.);
\draw [line width=0.4pt] (4.,-2.)-- (6.,-1.);
\draw [line width=0.4pt] (2.,-1.)-- (2.,1.);
\draw [line width=0.4pt] (2.,1.)-- (5.,2.);
\draw [line width=0.4pt] (6.,-1.)-- (5.,2.);
\draw (3.75,-2.0354248486414646) node[anchor=north west] {$\emptyset$};
\draw (1.9,-0.5) node[anchor=north west] {\small{$\{(1\quad 2\quad 3)\}$}};
\draw (4.3,-0.5) node[anchor=north west] {\small{$\{(1\quad 2)\}$}};
\draw (1.2,1.5) node[anchor=north west] {$A_3$};
\draw (4.7,2.8) node[anchor=north west] {$S_3$};
\begin{scriptsize}
\draw [fill=black] (4.,-2.) circle (2.5pt);
\draw [fill=black] (2.,-1.) circle (2.5pt);
\draw [fill=black] (6.,-1.) circle (2.5pt);
\draw [fill=black] (2.,1.) circle (2.5pt);
\draw [fill=ududff] (5.,2.) circle (2.5pt);
\draw [fill=black] (5.,2.) circle (2.5pt);
\end{scriptsize}
\end{tikzpicture}
}
\qquad
\subfloat[The lattice $N_5$  in $\mathcal{R}(Q_8)$.\label{NMM1}]{
\definecolor{ududff}{rgb}{0.30196078431372547,0.30196078431372547,1.}
\begin{tikzpicture}[line cap=round,line join=round,>=triangle 45,x=1.0cm,y=1.0cm]
\clip(1.37,-2.55) rectangle (7,3);
\draw [line width=0.4pt] (4.,-2.)-- (2.,-1.);
\draw [line width=0.4pt] (4.,-2.)-- (6.,-1.);
\draw [line width=0.4pt] (2.,-1.)-- (2.,1.);
\draw [line width=0.4pt] (2.,1.)-- (5.,2.);
\draw [line width=0.4pt] (6.,-1.)-- (5.,2.);
\draw (3.76,-2.0354248486414646) node[anchor=north west] {$\emptyset$};
\draw (1.185,-0.6) node[anchor=north west] {$\{i\}$};
\draw (6.,-0.6) node[anchor=north west] {$\{j\}$};
\draw (1.195,1.75) node[anchor=north west] {$\{\pm i\}$};
\draw (4.1,2.8) node[anchor=north west] {$\{\pm i,\pm j\}$};
\begin{scriptsize}
\draw [fill=black] (4.,-2.) circle (2.5pt);
\draw [fill=black] (2.,-1.) circle (2.5pt);
\draw [fill=black] (6.,-1.) circle (2.5pt);
\draw [fill=black] (2.,1.) circle (2.5pt);
\draw [fill=ududff] (5.,2.) circle (2.5pt);
\draw [fill=black] (5.,2.) circle (2.5pt);
\end{scriptsize}
\end{tikzpicture}
}
\qquad
\subfloat[The lattice $N_5$  in $\mathcal{R}(D_8)$.\label{NMMM1}]{
\definecolor{ududff}{rgb}{0.30196078431372547,0.30196078431372547,1.}
\begin{tikzpicture}[line cap=round,line join=round,>=triangle 45,x=1.0cm,y=1.0cm]
\clip(1.25,-2.55) rectangle (7,3);
\draw [line width=0.4pt] (4.,-2.)-- (2.,-1.);
\draw [line width=0.4pt] (4.,-2.)-- (6.,-1.);
\draw [line width=0.4pt] (2.,-1.)-- (2.,1.);
\draw [line width=0.4pt] (2.,1.)-- (5.,2.);
\draw [line width=0.4pt] (6.,-1.)-- (5.,2.);
\draw (3.76,-2.0354248486414646) node[anchor=north west] {$\emptyset$};
\draw (1.05,-0.6) node[anchor=north west] {$\{\sigma\}$};
\draw (6.,-0.6) node[anchor=north west] {$\{\tau\}$};
\draw (1.195,1.84) node[anchor=north west] {$\{\sigma,\sigma^3\}$};
\draw (3.75,2.8) node[anchor=north west] {$\{\sigma,\tau,\sigma^3,\sigma^2\tau\}$};
\begin{scriptsize}
\draw [fill=black] (4.,-2.) circle (2.5pt);
\draw [fill=black] (2.,-1.) circle (2.5pt);
\draw [fill=black] (6.,-1.) circle (2.5pt);
\draw [fill=black] (2.,1.) circle (2.5pt);
\draw [fill=ududff] (5.,2.) circle (2.5pt);
\draw [fill=black] (5.,2.) circle (2.5pt);
\end{scriptsize}
\end{tikzpicture}
}
\caption{The lattices of subracks of $S_3$, $Q_8$ and $D_8$ are not modular.\label{f2}}
\end{figure}
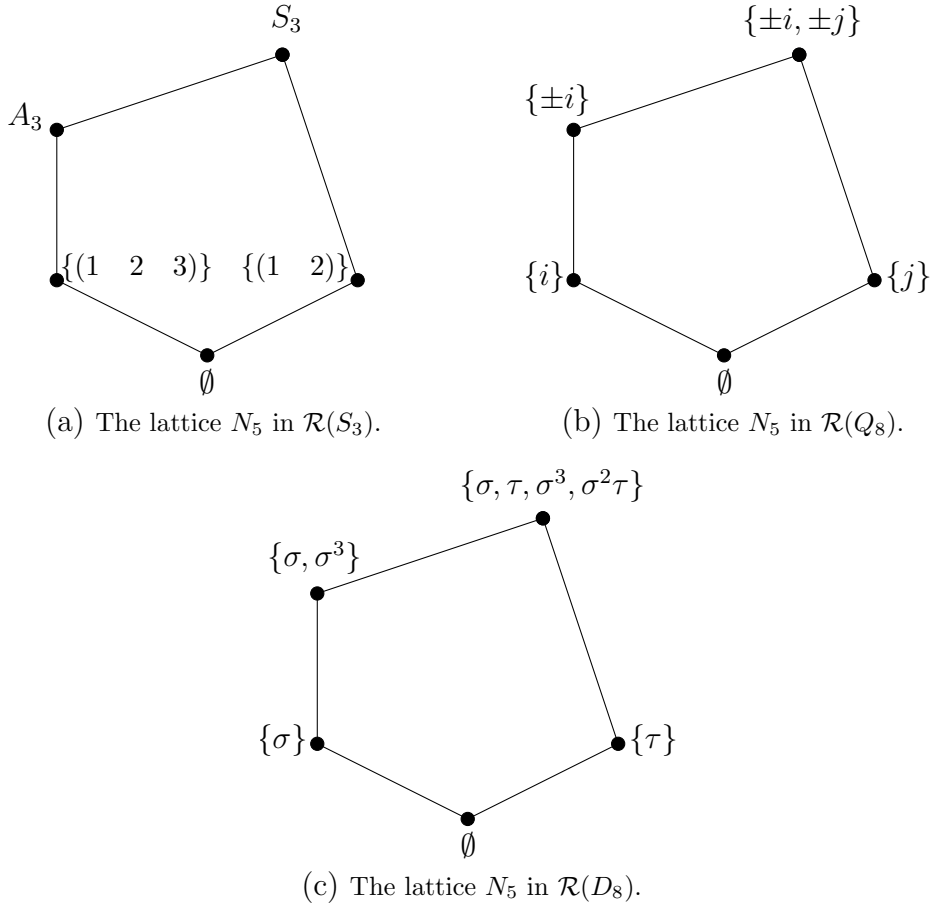
 As an immediate consequence of Theorem \ref{Prop}, the lattice of subracks of a finite group $G$ is modular, relatively complemented or orthocomplemented if and only if $\{x,y\}$  is a subrack of $G$ for all $x,y\in G$. Therefore, these equivalent conditions imply that  each two elements of $G$ commute with each other. Let $Q_8=\{\pm1, \pm i,\pm j,\pm k\}$ and $D_8=\left<\sigma,\tau:\sigma^4=1,\tau^2=1, \tau\sigma=\sigma^3\tau\right>$ be the quaternion group and dihedral group, respectively. In \cite{Wel},  all finite groups whose lattice of subracks is graded (i.e. all maximal chains of the lattice have the same length) were characterized.  Indeed, it was shown that the lattice of subracks of a finite group is graded if and only if the group is abelian, $S_3$, $D_8$ or $Q_8$. See Figure \ref{f2} to observe why $S_3$, $Q_8$ and $D_8$ have non-modular lattice of subracks. We would also like to remark that in  Theorem~\ref{Prop}, we  directly determined the groups (including infinite groups) with the modular lattice of subracks  without using the fact that modular lattices are graded. 

The \textit{order complex} of a poset $P$, denoted by $\Delta(P)$, is the set of all chains of $P$. Here, by the homotopy type of a poset $P$, we mean  the homotopy type of the order complex of $\overline{P}=P\backslash\{\hat{0},\hat{1}\}$. In the following, we determine the homotopy type of the lattice of subracks of a finite $G$-rack as a generalization of  \cite[Proposition~1.1]{Wel} for the lattice of subracks of a finite group. For this purpose, we need the well known nerve lemma (see \cite[Theorem~10.7]{Bjo}). The \textit{nerve} of a family $\mathcal{A}=\{A_i\}_{i\in I}$ of sets, denoted by $\mathcal{N}(\mathcal{A})$,  is a simplicial complex whose faces are the subsets $J$ of $I$ for which $\bigcap_{j\in J}A_j$ is nonempty. A \textit{covering} for a simplicial complex $\Delta$ is a collection $\{\Delta_i\}_{i\in I}$ of subcomplexes of $\Delta$ such that $\Delta=\bigcup_{i\in I}\Delta_i$.

\begin{Lemma}{(Nerve Lemma) }\label{Nerve}
Let $\Delta$ be a finite simplicial complex, and let $\mathcal{A}=\{\Delta_i\}_{i=1}^n$ be  a covering for $\Delta$. If for every $J\subseteq\{1,\ldots,n\}$, the intersection $\bigcap_{j\in J}\Delta_j$ is  $\{\emptyset\}$ or contractible, then $\mathcal{N}(\mathcal{A})$ and $\Delta$ have the same homotopy type.
\end{Lemma}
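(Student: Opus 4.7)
The plan is to prove Lemma \ref{Nerve} by the standard ``blow-up'' comparison between $\Delta$ and $\mathcal{N}(\mathcal{A})$. I would introduce an auxiliary simplicial complex (more precisely, the face poset of one) $X$ whose elements are pairs $(\sigma, J)$ with $\sigma$ a nonempty face of $\Delta$, $J$ a face of $\mathcal{N}(\mathcal{A})$, and $\sigma \in \bigcap_{j \in J}\Delta_j$, ordered componentwise. Two natural projections appear: $p \colon X \to \Delta$ sending $(\sigma, J) \mapsto \sigma$ and $q \colon X \to \mathcal{N}(\mathcal{A})$ sending $(\sigma, J) \mapsto J$. The strategy is to show that both $p$ and $q$ induce homotopy equivalences on geometric realizations, from which $\Delta \simeq \mathcal{N}(\mathcal{A})$ follows immediately.

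The key step is to apply Quillen's Theorem A to each projection. The preimage of $\sigma$ under $p$ is the full simplex on $I_\sigma := \{j : \sigma \in \Delta_j\}$, which is nonempty since $\mathcal{A}$ is a cover, and hence contractible. The preimage of $J$ under $q$ is order-isomorphic to the face poset of $\bigcap_{j \in J}\Delta_j$, which is contractible by hypothesis (the alternative $\{\emptyset\}$ cannot occur since $J \in \mathcal{N}(\mathcal{A})$). Theorem A then yields both homotopy equivalences at once, and composing gives the result.

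An alternative and more elementary route is induction on $n = |\mathcal{A}|$. The base case $n = 1$ is immediate, since $\Delta = \Delta_1$ is contractible and $\mathcal{N}(\mathcal{A})$ is a single vertex. For the inductive step, set $\Delta' = \Delta_1 \cup \cdots \cup \Delta_{n-1}$ and $\mathcal{B} = \{\Delta_i \cap \Delta_n\}_{i<n}$, and write $\Delta$ as the pushout of $\Delta' \hookleftarrow \Delta' \cap \Delta_n \hookrightarrow \Delta_n$; the nerve decomposes analogously as the pushout of $\mathcal{N}(\mathcal{A}') \hookleftarrow \mathcal{N}(\mathcal{B}) \hookrightarrow \mathrm{cone}(\mathcal{N}(\mathcal{B}))$, the cone with apex the vertex $n$. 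Since subcomplex inclusions are cofibrations, the gluing lemma for homotopy equivalences combines the three inductive equivalences (applied to $\Delta'$, $\Delta_n$, and $\Delta' \cap \Delta_n$, the last two having contractible unions by hypothesis) into the desired one.

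The hardest part I anticipate is bookkeeping the ``empty intersection'' convention: when $\bigcap_{j \in J}\Delta_j = \{\emptyset\}$, the set $J$ must not appear in $\mathcal{N}(\mathcal{A})$, so the definition of $X$ and the indexing of the pushout must honour this in order for $q$ to land in $\mathcal{N}(\mathcal{A})$ and for the contractibility hypothesis to be the only geometric input required. Once the combinatorial definitions are aligned with the nerve's convention, Theorem A (respectively the gluing lemma in the inductive approach) discharges the remaining content without further work.
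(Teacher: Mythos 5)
The paper does not prove this lemma: it is imported as a known result with a pointer to \cite[Theorem~10.7]{Bjo}, so there is no internal argument to compare against. Your primary route --- the blow-up complex $X$ of pairs $(\sigma,J)$ with its two projections --- is exactly the standard proof underlying that citation (the blow-up is the homotopy colimit of the diagram $J\mapsto\bigcap_{j\in J}\Delta_j$ over $\mathcal{N}(\mathcal{A})$), and your handling of the $\{\emptyset\}$-versus-nonempty convention is the right thing to worry about for the application in the paper. One caution on the key step: Quillen's Theorem A for poset maps requires contractibility of the order-theoretic fibers $f^{-1}(Q_{\le q})$ (comma categories), not of the set-theoretic preimages $f^{-1}(\sigma)$ and $f^{-1}(J)$ that you compute, and for the two projections of $X$ those larger fibers are not the objects you identified. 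The clean way to cash in precisely the fiber computations you made is homotopy invariance of the homotopy colimit (objectwise contractible values give an equivalence to the hocolim of points, which realizes $\mathcal{N}(\mathcal{A})$ on one side and $\mathrm{sd}(\Delta)$ on the other); alternatively, a single application of the fiber theorem to the order-reversing map $\sigma\mapsto I_\sigma$ from the face poset of $\Delta$ to that of $\mathcal{N}(\mathcal{A})$ works, since the preimage of $\{K: K\supseteq J\}$ is the face poset of $\bigcap_{j\in J}\Delta_j$.

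Your inductive alternative, as written, has a genuine gap: the gluing lemma requires a \emph{map of pushout diagrams}, i.e.\ three homotopy equivalences commuting with the inclusions, whereas the inductive hypothesis only supplies equivalences objectwise. The homotopy type of $\Delta'\cup\Delta_n$ is not determined by the homotopy types of $\Delta'$, $\Delta_n$ and $\Delta'\cap\Delta_n$ alone, so ``combining the three inductive equivalences'' is not licensed; this is the classical pitfall of the naive induction, and the usual repair is to induct on a specific natural comparison map (for instance the projection from your blow-up complex), at which point the first argument has already done the work.
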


Now, we are ready to prove the following theorem.

\begin{Theorem}
Let $R$ be a finite $G$-rack which has $c$ orbits. Then $\mathcal{R}(R)$ has the homotopy type of  the $(c-2)$-dimensional sphere, $S^{c-2}$.
\end{Theorem}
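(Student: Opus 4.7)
The plan is to apply the Nerve Lemma (Lemma \ref{Nerve}) to a cover of $\Delta(\overline{\mathcal{R}(R)})$ indexed by the $c$ orbits of $R$. Let $X_1,\dots,X_c$ be the orbits. The key observation is that each $R\setminus X_i$ is a union of orbits, hence a subrack of $R$, and it is a proper nonempty subrack whenever $c\ge 2$. (The degenerate case $c=1$ forces $\overline{\mathcal{R}(R)}=\emptyset$ by the $G$-rack condition, whose order complex has the homotopy type of $S^{-1}$ by the usual convention.) For each $i$, let $\Delta_i$ denote the subcomplex of $\Delta(\overline{\mathcal{R}(R)})$ consisting of chains of subracks that are all contained in $R\setminus X_i$.

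First I would verify that $\{\Delta_i\}_{i=1}^c$ is a covering. This is exactly where the $G$-rack hypothesis is used: any $Q\in\overline{\mathcal{R}(R)}$ is a proper subrack, so by the defining property of a $G$-rack it must fail to meet some orbit $X_i$, giving $Q\subseteq R\setminus X_i$. A chain in $\overline{\mathcal{R}(R)}$ is then contained in the $\Delta_i$ corresponding to any orbit missed by its top element.

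Next I would compute the intersections. For any nonempty $J\subseteq\{1,\dots,c\}$, a chain lies in $\bigcap_{j\in J}\Delta_j$ if and only if every subrack in the chain is contained in $R\setminus\bigcup_{j\in J}X_j$. If $J\subsetneq\{1,\dots,c\}$, this set is a nonempty proper subrack of $R$, and the intersection is the order complex of a poset with a greatest element, so it is a cone and is contractible. If $J=\{1,\dots,c\}$, then $R\setminus\bigcup_{j}X_j=\emptyset$, so the intersection contains no nonempty subrack and is the empty complex. Therefore the nerve $\mathcal{N}(\{\Delta_i\})$ is the simplicial complex on $\{1,\dots,c\}$ whose faces are precisely the proper subsets, that is, the boundary of the $(c-1)$-simplex, which is homeomorphic to $S^{c-2}$. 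The Nerve Lemma then yields $\Delta(\overline{\mathcal{R}(R)})\simeq S^{c-2}$.

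The main obstacle is essentially bookkeeping: one must verify that the restrictions of $\Delta(\overline{\mathcal{R}(R)})$ to the various orbit complements really assemble into a cover with contractible pieces and contractible pairwise intersections, so that the combinatorial nerve faithfully captures the homotopy type. No deeper homotopy-theoretic input is required beyond the Nerve Lemma itself. The $G$-rack hypothesis enters only at the covering step, guaranteeing that orbit complements exhaust the proper part of $\mathcal{R}(R)$, which is precisely what fails for general racks.
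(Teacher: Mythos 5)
Your proposal is correct and follows essentially the same route as the paper: cover $\Delta(\overline{\mathcal{R}(R)})$ by the subcomplexes of chains lying in the orbit complements $R\setminus X_i$ (which the paper identifies as the maximal subracks of the $G$-rack), observe that all proper intersections are cones hence contractible while the full intersection is trivial, and conclude via the Nerve Lemma that the nerve is the boundary of a $(c-1)$-simplex. The only differences are cosmetic bookkeeping (the paper phrases the cover via maximal subracks and keeps track of the empty face explicitly), so no further comment is needed.
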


\begin{proof}
For every subrack $Q$ of $R$, let 
\[\Delta_Q=\left\{\{\sigma_0,\ldots,\sigma_t\}\in\Delta(\overline{\mathcal{R}(R)}): \emptyset\subsetneq\sigma_0\subsetneq\cdots\subsetneq \sigma_t\subseteq Q\right\}.\]
In particular, $\Delta_{\emptyset}=\{\emptyset\}$. We have that $\mathcal{A}=\left\{\Delta_M: \text{$M$ is a maximal subrack of $R$}\right\}$ is a covering for $\Delta(\overline{\mathcal{R}(R)})$. Let $\{X_1,\ldots,X_c\}$ be the set of all orbits of $R$. Then $M$ is a maximal subrack of $R$ if and only if $M=R\backslash X_i$ for some $i=1,\ldots,c$, since $R$ is a $G$-rack. Let $M_i=R\backslash X_i$ for all $i=1,\ldots,c$, and let $I\subseteq\{1,\ldots,c\}$. Then it follows that $\bigcap_{i\in I}\Delta_{M_i}=\{\emptyset\}$ if and only if $I=\{1,\ldots,c\}$. This implies that the facets of $\mathcal{N}(\mathcal{A})$ are exactly the subsets of $\{1,\ldots,c\}$ with $c-1$ elements. Thus $\mathcal{N}(\mathcal{A})$ has the homotopy type of $S^{c-2}$. It is clear that for any nonempty  subrack $Q\neq R$,  the simplicial complex $\Delta_Q$ is a cone over $Q$, so that $\Delta_Q$ is contractible. On the other hand, for any $I\subseteq\{1,\cdots c\}$ we have $\bigcap_{i\in I}\Delta_{M_i}=\Delta_{\bigcap_{i\in I}M_i}$ which is $\{\emptyset\}$ or a cone over $\bigcap_{i\in I}M_i$, and hence it is $\{\emptyset\}$ or contractible. Now, it follows from Lemma \ref{Nerve} that the homotopy types of $\Delta(\overline{\mathcal{R}(R)})$ and $\mathcal{N}(\mathcal{A})$ are the same, and hence  $\Delta(\overline{\mathcal{R}(R)})$ has the homotopy type of $S^{c-2}$.
\end{proof}

Despite all finite groups are $G$-racks, in the next example we show that there are some infinite groups which are not $G$-racks. 

\begin{Example}
Let $H$ be the subgroup of all permutations on $\mathbb{N}$ fixing all elements  except a finite number of elements. One could consider $H$ as $\bigcup_{n\in\mathbb{N}}S_n$ where $S_n$ and its natural embedding in $S_{\mathbb{N}}$ are identified for all $n\in\mathbb{N}$. The orbits of $H$ are exactly its conjugacy classes. Hence any orbit of $H$ including an element $x$ also contains all elements of $H$ which have the same cycle structure as $x$. We show that $H$ is not a $G$-rack. Note that the set of all orbits of $H$ is countable. Let $\{C_i\}_{i\in\mathbb{N}}$ be the set of all orbits of $H$. Then we can choose inductively pairwise disjoint permutations $g_i\in C_i$. Let $Q=\{g_i\}_{i\in I}$. Then any  two elements of $Q$ commute with each other, and hence $Q$ is a subrack of $H$. Therefore $Q$ is a proper subrack of $H$ including an element of every orbit of $H$ which implies that $H$ is not a $G$-rack.
\end{Example}

\section{Complementedness of The Lattices of Subracks of Infinite Racks}\label{S}
In this section, we show that the lattice of subracks of an infinite rack is not necessarily complemented which answers  Question~\ref{Que}. To do this, we show that the lattice of subracks of $\mathbb{Q}$, as a dihedral rack, is not complemented. This example also shows that a rack may not have a maximal subrack. Indeed in Section~\ref{sec1} to prove that the lattice of subracks of every finite rack is complemented, we  used the fact that every subrack is contained in a maximal subrack. We also provide examples of infinite racks whose lattices of subracks are complemented. Although all trivial racks have  complemented lattices of subracks, there are some non-trivial infinite racks whose lattices of subracks are complemented.

\medskip 
First, in the following lemma, we characterize all subracks of $\mathbb{Q}$ as a dihedral rack. We use this characterization to show that the dihedral rack $\mathbb{Q}$ is not complemented.

\begin{Lemma}\label{Lemma1}
The nonempty subracks of $\mathbb{Q}$ (as a dihedral rack) are exactly the cosets of the subgroups of $\mathbb{Q}$.
\end{Lemma}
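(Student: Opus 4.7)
The plan is to prove both inclusions. One direction is a direct verification: for a subgroup $H\le\mathbb{Q}$ and any coset $a+H$, one has
\[
(a+h_1)\tg(a+h_2)=2(a+h_1)-(a+h_2)=a+(2h_1-h_2)\in a+H,
\]
and the map $y\mapsto 2x-y$ is an involution on $\mathbb{Q}$, so its restriction to any subset closed under it is automatically bijective. Hence every coset is a subrack.

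For the reverse direction, let $S$ be a nonempty subrack and fix $a\in S$. Set $H:=S-a=\{s-a:s\in S\}$. The plan is to show $H$ is a subgroup of $(\mathbb{Q},+)$; then $S=a+H$ is a coset. Trivially $0\in H$, and translating the subrack condition gives that $H$ is closed under $(x,y)\mapsto 2x-y$ (since $2(h_1+a)-(h_2+a)=(2h_1-h_2)+a\in S$). Setting $x=0$ yields $-y\in H$ for every $y\in H$, so $H=-H$. A short induction using the identity $(k+1)h=2\cdot kh-(k-1)h$ then shows that $kh\in H$ for every $h\in H$ and $k\in\mathbb{Z}$.

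The main obstacle is closure under addition. Given $h_1,h_2\in H$, I will exploit the fact that any finitely generated subgroup of $\mathbb{Q}$ is cyclic: the subgroup of $(\mathbb{Q},+)$ generated by $h_1$ and $h_2$ equals $d\mathbb{Z}$ for some $d\in\mathbb{Q}$, with $h_1=md$ and $h_2=nd$ where $\gcd(m,n)=1$. By Bezout there are $\alpha,\beta\in\mathbb{Z}$ with $\alpha m+\beta n=1$, and using the freedom $(\alpha,\beta)\mapsto(\alpha+tn,\beta-tm)$ one can force at least one of $\alpha,\beta$ to be even: if both $m,n$ are odd, then $\alpha+\beta$ is odd, so one coefficient is already even; if exactly one of $m,n$ is even, say $n$, then $m$ is odd and varying $t$ changes the parity of $\beta-tm$, so $\beta$ can be made even. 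Writing, say, $\alpha=2\gamma$, we compute
\[
d=2\gamma md+\beta nd=2(\gamma h_1)-(-\beta h_2).
\]
By the previous paragraph $\gamma h_1,\beta h_2\in H$ and $-\beta h_2\in H$, so closure of $H$ under $(x,y)\mapsto 2x-y$ gives $d\in H$. Hence $d\mathbb{Z}\subseteq H$, and in particular $h_1+h_2=(m+n)d\in H$.

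Combining the three properties shows $H$ is a subgroup of $\mathbb{Q}$, completing the proof. The crux of the argument — and the step that fails for a general abelian group — is the Bezout manipulation in the third paragraph; it works here precisely because $\mathbb{Q}$ is one-dimensional over itself, forcing any two elements to lie in a common cyclic subgroup so that the coefficients $m,n$ are coprime integers.
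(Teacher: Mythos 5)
Your proof is correct and follows essentially the same route as the paper: translate to reduce to the case $0\in S$, obtain closure under integer multiples via $(k+1)h=2(kh)-(k-1)h$, and then get closure under addition from a Bezout identity in which a factor of $2$ is forced onto one coefficient, which is possible precisely because the two relevant integers are coprime and hence not both even. The paper packages this last step as the observation that $\gcd(ad,bc)$ equals $\gcd(2ad,bc)$ or $\gcd(ad,2bc)$, while you phrase it via the cyclic generator of $\langle h_1,h_2\rangle$ and the parity of Bezout coefficients, but the argument is the same; you additionally spell out the (easy) converse inclusion that the paper leaves implicit.
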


\begin{proof}
Let $Q$ be a nonempty subrack of $\mathbb{Q}$. First, assume that $0\in Q$. Then for any integer $s$ and any $z\in Q$, one has $sz\in Q$. Indeed, if $s\ge 0$, then $sz\in Q$ and $-sz\in Q$ by induction on $s$, since $(sz)\tg\left((s-1)z\right)=(s+1)z$ and $0\in Q$. Now, we show that $Q$ is a subgroup of $\mathbb{Q}$. Let $x,y\in Q$ where $x=a/b$ and $y=c/d$  for some integers $a,b,c,d$. If $x=0$ or $y=0$, then obviously $x-y\in Q$. Let $e=\mathrm{gcd}(ad,bc)$. Then either $e=\mathrm{gcd}(2ad,bc)$ or $e=\mathrm{gcd}(ad,2bc)$. Without loss of generality, assume that $e=\mathrm{gcd}(2ad,bc)$. Thus, there are integers $s,t$ for which $2sad-tbc=e$. So, it follows from $(sx)\tg(ty)\in Q$ that $e/bd\in Q$, and hence $x-y$, which is a multiple of $e/bd$, belongs to $Q$.\\
Next, assume that $Q$ is an arbitrary nonempty subrack of $\mathbb{Q}$ and $x\in Q$. One could observe that $Q-x$ is a subrack of $\mathbb{Q}$ including $0$, and hence by the above argument $Q-x$ is a subgroup of $\mathbb{Q}$. Therefore, $Q$ is a coset of a subgroup of $\mathbb{Q}$.
\end{proof}

Now, we are ready to show that there are infinite racks whose lattices of subracks are not complemented. 

\begin{Theorem}
The lattice of subracks of $\mathbb{Q}$ (as a dihedral rack) is not complemented.
\end{Theorem}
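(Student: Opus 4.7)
The plan is to exhibit a specific subrack of $\mathbb{Q}$ that admits no complement in $\mathcal{R}(\mathbb{Q})$, and the natural candidate is $\mathbb{Z}$ itself, which is a subgroup and hence a subrack by Lemma~\ref{Lemma1}. I would argue by contradiction: assume $Q'\in\mathcal{R}(\mathbb{Q})$ is a complement of $\mathbb{Z}$. The empty set is immediately ruled out, since $\mathbb{Z}\vee\emptyset=\mathbb{Z}\neq\mathbb{Q}$, so by Lemma~\ref{Lemma1} we may write $Q'=H+a$ for some subgroup $H\le\mathbb{Q}$ and some $a\in\mathbb{Q}$.

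The next step is to translate both complementedness conditions into purely group-theoretic statements. The meet condition $\mathbb{Z}\cap(H+a)=\emptyset$ is straightforwardly equivalent to $a\notin\mathbb{Z}+H$. For the join, I would observe that $T:=\ll\mathbb{Z},H+a\gg$ is a subrack containing $0$, so by the first paragraph of the proof of Lemma~\ref{Lemma1} it is actually a subgroup of $\mathbb{Q}$. Consequently $T$ coincides with the subgroup of $\mathbb{Q}$ generated by $\mathbb{Z}\cup(H+a)$, namely $\mathbb{Z}+H+\mathbb{Z}a$, so the join condition is $\mathbb{Z}+H+\mathbb{Z}a=\mathbb{Q}$.

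The conceptual heart of the argument, and the step I expect to do all the real work, is a divisibility obstruction. The equality $\mathbb{Z}+H+\mathbb{Z}a=\mathbb{Q}$ says that the quotient $\mathbb{Q}/(\mathbb{Z}+H)$ is cyclic, generated by the image of $a$; but every quotient of the divisible abelian group $\mathbb{Q}$ is itself divisible, and a nontrivial divisible abelian group is never cyclic (indeed, not even finitely generated). Hence $\mathbb{Z}+H=\mathbb{Q}$, contradicting $a\notin\mathbb{Z}+H$, so no complement of $\mathbb{Z}$ can exist. The main obstacle I anticipate is identifying the right candidate subrack and recognizing, via Lemma~\ref{Lemma1}, that the problem reduces to a rigidity statement about divisible groups.
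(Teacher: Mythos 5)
Your argument is correct and follows essentially the same route as the paper: both exhibit a specific subrack with no complement, reduce via Lemma~\ref{Lemma1} to a group-theoretic statement, and conclude from the fact that the divisible group $\mathbb{Q}$ admits no proper subgroup with nontrivial cyclic (equivalently, finite) quotient. The only differences are cosmetic --- the paper takes the singleton $\{0\}$ as the witness, computes $\ll H+x,0\gg=H+\left<x\right>$, and derives a proper finite-index subgroup of $\mathbb{Q}$ by an explicit gcd computation, whereas you take $\mathbb{Z}$ and phrase the same obstruction as the non-cyclicity of the nontrivial divisible quotient $\mathbb{Q}/(\mathbb{Z}+H)$.
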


\begin{proof}
Suppose that $Q$ is a nonempty subrack of $\mathbb{Q}$. Then $Q=H+x$ where $H$ is a subgroup of $\mathbb{Q}$ and $x\in Q$ by Lemma \ref{Lemma1}. Now let $L=\ll Q,0\gg$ which is a subgroup of $\mathbb{Q}$ by Lemma \ref{Lemma1}. We show that $L=H+\left<x\right>$. Since $H+\left<x\right>$ contains $Q\cup\{0\}$, we have $L\subseteq H+\left<x\right>$. On the other hand,  for any $h\in H$ and any integer $s$, we have $h+sx=(h+x)+(s-1)x\in L$, since $L$ is a subgroup of $\mathbb{Q}$. Therefore $H+\left<x\right>\subseteq L$.

Now, we show that the subrack $\{0\}$ does not have any complement in $\mathcal{R}(\mathbb{Q})$. On contrast, assume that $Q=H+x$ is a complement of $\{0\}$ in $\mathcal{R}(\mathbb{Q})$, where $H$ is a subgroup of $\mathbb{Q}$ and $x\in Q$. It follows that $H+\left<x\right>=\mathbb{Q}$, and hence $\mathbb{Q}/H$ and $\left<x\right>/\left(\left<x\right>\cap H\right)$ are isomorphic groups. Since $0\notin Q$ it follows that  $x\neq 0$. Let $x=r/s$.  Then $\{0\}\neq\left<r\right>\subseteq\left<x\right>$. Moreover, $H\neq\{0\}$ implies that there exists a nonzero element $y=t/u\in H$, and hence $\{0\}\neq\left<t\right>\subseteq H$. Therefore,  $\left<rt\right>\subseteq H\cap\left<x\right>$, and hence $\left<x\right>/\left(\left<x\right>\cap H\right)$ is a finite cyclic group. This implies that $H$ is a proper subgroup of $\mathbb{Q}$ with finite index which is a contradiction.
\end{proof}

In the following example, we show that the lattice of subracks of every $G$-rack is complemented. 

\begin{Example}
Let $R$ be a $G$-rack, and let $Q$ be a subrack of $R$. Assume that $L$ is the union of all orbits of $R$ whose intersections with $Q$ are all empty. Then $L$ and $Q$ are disjoint and $\ll Q, L\gg$ has a nonempty intersection with any orbit of $R$, and hence $\ll Q, L\gg=R$. It follows that $L$ is a complement of $Q$ in $\mathcal{R}(R)$. As a consequence, the lattice of subracks of every $P$-rack is complemented as well.
\end{Example}

In the next example, we provide an infinite quandle whose lattice of subracks is complemented. 

\begin{Example}
Let $R$ be the quandle of all transpositions in $S_{\mathbb{N}}$ equipped with the conjugation operation. We show that $\mathcal{R}(R)$ is complemented. Note that for any subrack $Q$ of $R$ and any transposition $(a\;\;b)$ of $R$, we have 
\[ \ll Q, (a \;\; b)\gg=Q\cup\{(a\;\; x): \exists x\in\mathbb{N}\;(x\;\;b)\in Q\}\cup\{(b\;\; x): \exists x\in\mathbb{N}\;(x\;\;a)\in Q\},\]
by \cite[Lemma 2.2]{SK1}.
Now, let $Q$ be a subrack of $R$ and $\Gamma$ be the set of all subracks $T$ of $R$ with $T\cap Q=\emptyset$. One could easily observe that $\Gamma$ is nonempty and any chain in $\Gamma$ has an upper bound in $\Gamma$. Thus $\Gamma$ has a maximal element $Q'$ by Zorn's Lemma. We show that $Q'$ is a complement of $Q$ in $\mathcal{R}(R)$. Suppose on contrast that $\ll Q,Q'\gg\neq R$, and hence there exists a transposition $(a\;\; b)\in R\backslash\ll Q,Q'\gg$. By maximality of $Q'$ in $\Gamma$, there exists an element $(x\;\;y)\in\;\ll Q', (a\;\;b)\gg\cap\;Q$. Since $(x\;\;y)\notin Q'$ and $(x\;\;y)\in\;\ll Q', (a\;\;b)\gg$, we have $\{x,y\}\cap\{a,b\}\neq\emptyset$. Hence  we may consider $(x\;\;y)=(a\;\;z)$ and $(b\;\;z)\in Q'$ for some $z\in\{x,y\}$. This implies that $(a\;\;b)=(a\;\;z)(b\;\;z)(a\;\;z)\in\;\ll Q,Q'\gg$, a contradiction. Therefore, $Q'$ is a complement of $Q$ in $\mathcal{R}(R)$.
\end{Example}

For any rack $R$, by considering $\mathrm{Inn}(R)$ with the conjugation operation, we have the homomorphism of racks  $\phi:R\to\mathrm{Inn}(R)$ defined by $\phi(r)=f_r$ for any $r\in R$. Indeed, we have $\phi(r\tg s)=f_{r\tg s}=f_{f_r(s)}=f_rf_sf_r^{-1}=f_r\tg f_s$. Now, by considering the dihedral rack $\mathbb{Q}$, the homomorphism $\phi$ is injective, and hence $\phi(\mathbb{Q})$ is a subrack of $\mathrm{Inn}(Q)$ which is isomorphic to the rack $\mathbb{Q}$. This implies that infinite groups might have some subracks which are not complemented and do not have any maximal subracks.


In the next theorem, we provide some equivalent statements concerning the structure of the lattice of subracks of a rack.

\begin{Theorem} \label{equi}
Let $R$ be a rack. Then the following statements are equivalent:
\begin{enumerate}
\item
$\mathcal{R}(R)$ is a Boolean algebra. 
\item
$\mathcal{R}(R)$ is distributive.
\item
$\mathcal{R}(R)$ is pseudocomplemented.
\item
$\mathcal{R}(R)$ is uniquely complemented.
\end{enumerate}
\end{Theorem}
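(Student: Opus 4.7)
My plan is to prove the three non-trivial converses by showing that each of (2), (3), (4) forces $R$ to be the \emph{trivial} quandle (i.e.\ $a \tg b = b$ for all $a, b \in R$); in that case every subset of $R$ is a subrack and $\mathcal{R}(R) = 2^R$ is manifestly Boolean, while the implications $(1) \Rightarrow (2), (3), (4)$ are standard facts from Boolean-algebra theory. Via the isomorphism $\mathcal{R}(R) \cong \mathcal{R}(\bar R)$ of \cite[Corollary~2.7]{SK1}, I reduce at the outset to the case where $R$ is a quandle, so that each singleton $\{x\}$ is an atom of $\mathcal{R}(R)$ and distinct atoms are disjoint.

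For $(2) \Rightarrow (1)$, I will suppose $R$ is non-trivial and pick $x, y \in R$ with $x \tg y = z \neq y$; since $f_x$ is a bijection fixing $x$, one also has $z \neq x$, so $\{x\}, \{y\}, \{z\}$ are three pairwise disjoint atoms and $z \in \{x\} \vee \{y\}$. The identity
\[ \{z\} = \{z\} \wedge (\{x\} \vee \{y\}) \neq \emptyset = (\{z\} \wedge \{x\}) \vee (\{z\} \wedge \{y\}) \]
then violates distributivity. For $(3) \Rightarrow (1)$, the pseudocomplement $\{x\}^*$ must contain every singleton $\{y\}$ with $y \neq x$, so $\{x\}^* = R \setminus \{x\}$; in particular $R \setminus \{x\}$ must itself be a subrack for every $x \in R$. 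A short check (if $c = a \tg b \notin \{a, b\}$ for some $a, b$, then $a, b \in R \setminus \{c\}$ while $a \tg b \notin R \setminus \{c\}$) forces $a \tg b \in \{a, b\}$ always, and since in a quandle $a \tg b = a$ would entail $b = a$ by bijectivity of $f_a$, we get $a \tg b = b$, i.e.\ $R$ is trivial.

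The implication $(4) \Rightarrow (1)$ is, in my view, the main obstacle, since in general lattices unique complementedness is strictly weaker than being Boolean (Dilworth). The plan is again to assume a non-trivial relation $x \tg y = z$ with $x, y, z$ distinct and to exhibit two distinct complements of $\{x\}$ in $\mathcal{R}(R)$. The symmetry I will exploit is that subracks are closed under the inverse translations $f_a^{-1}$ as well as $f_a$, giving $y = f_x^{-1}(z) \in \ll x, z \gg$; hence $\ll x, y \gg = \ll x, z \gg =: T$, so that $\{y\}$ and $\{z\}$ are already two distinct complements of $\{x\}$ inside the interval $[\emptyset, T]$. If $T = R$ this at once contradicts unique complementedness. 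In the remaining case $T \subsetneq R$, I intend to apply Zorn's lemma (using that the union of any chain of subracks is a subrack) to extend $\{y\}$ and $\{z\}$ separately to maximal subracks of $R$ avoiding $x$ whose joins with $\{x\}$ both exhaust $R$; the hard part, and the technical hurdle I anticipate, is arranging the two resulting maximal extensions to remain genuinely distinct, since naive Zorn extensions of initially distinct subracks can collapse to the same maximal element without a careful use of the $y \leftrightarrow z$ symmetry supplied above.
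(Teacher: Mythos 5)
Your reduction to quandles, your direct arguments for $(2)\Rightarrow(1)$ and $(3)\Rightarrow(1)$, and the trivial implications from (1) are all correct; the third of these is essentially the paper's own argument (the pseudocomplement of a singleton is forced to be $R\backslash\{x\}$, which cannot be a subrack unless the quandle is trivial). The problem is $(4)\Rightarrow(1)$, which you yourself flag as unresolved, and the gap there is genuine, not merely technical. Your two complements $\{y\}$ and $\{z\}$ of $\{x\}$ live only in the interval $[\emptyset,T]$ with $T=\ll x,y\gg$, and a uniquely complemented lattice need not be relatively uniquely complemented, so nothing follows unless $T=R$. The proposed Zorn repair has two independent failure points: (a) a subrack containing $y$ that is maximal among those avoiding $x$ need not satisfy $\ll x,Q\gg=R$ at all (maximality controls only what can be adjoined to $Q$ without generating $x$, not what $Q\cup\{x\}$ generates --- compare the paper's $\mathbb{Q}$ example, where $\{0\}$ has no complement whatsoever even though maximal disjoint subracks exist by Zorn); and (b) even if both extensions were complements, you have no mechanism forcing them to stay distinct, as you admit. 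So the proof as written does not close.

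The paper's argument for $(4)\Rightarrow(1)$ avoids Zorn entirely and instead manufactures competing complements out of the one complement the hypothesis already guarantees. Assume $f_x(y)\neq y$ and let $Q$ be the unique complement of $\{y\}$, so $y\notin Q$ and $\ll y,Q\gg=R$. Since $f_y$ is an automorphism fixing $y$, the set $f_y(Q)$ is again a complement of $\{y\}$, whence $f_y(Q)=Q$; and for each $q\in Q$ one checks $f_q(y)\notin Q$ and $\ll Q,f_q(y)\gg=\ll Q,y\gg=R$, so $\{f_q(y)\}$ is a second complement of $Q$ and uniqueness forces $f_q(y)=y$. These two identities make $Q\cup\{y\}$ a subrack, so $R=\ll y,Q\gg=Q\cup\{y\}$ and $Q=R\backslash\{y\}$, which contradicts $f_x(y)\neq y$ exactly as in the pseudocomplemented case. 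If you want to keep your own framework, this is the missing idea: use the inner automorphisms $f_y$ and $f_q$ to move a given complement to a potentially different one, rather than trying to build two complements from scratch.
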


\begin{proof}
It was shown in \cite[Theorem 2.8]{SK1} that the statements (1) and (2) are equivalent. Now, we show that the other statements are equivalent to the statement (1). By \cite[Corollary 2.7]{SK1}, we have that  the lattice of subracks of every rack is isomorphic to the one for its corresponding quandle, and hence here it is enough to consider quandles.

First, assume that $\mathcal{R}(R)$ is pseudocomplemented. It is enough to show that $f_x(y)=y$ for all $x,y\in R$. Assume on contrast  that there exist $x,y\in R$ with $f_x(y)\neq y$.  For every $z\in R\backslash\{y\}$, it follows from $\{y\}\wedge\{z\}=\hat{0}$ that $z\in\{y\}^*$, and hence $\{y\}^*=R\backslash\{y\}$. Since $f_x(y)\neq y$, we have $f_x(y)\in \{y\}^*$. Hence $y=f_x^{-1}(f_x(y))\in \{y\}^*$, because $x\in \{y\}^*$, which is a contradiction. Therefore $\mathcal{R}(R)$ is a Boolean algebra. Conversely, it is well-known that every Boolean algebra is pseudocomplemented.

Next, suppose that $\mathcal{R}(R)$ is uniquely complemented.  Assume on contrast that $f_x(y)\neq y$ for some $x,y\in R$. Consider $Q$ as the unique complement of $\{y\}$ in $\mathcal{R}(R)$, and hence $\ll y,Q\gg=R$ and $y\notin Q$. One could observe that  
\[\ll Q, f_q(y)\gg=\ll f_y(Q),y\gg=\ll y,Q\gg=R,\]
for any $q\in Q$. It follows from $y\notin Q$ that  $y\notin f_y(Q)$ and $f_q(y)\notin Q$ for any $q\in Q$. Therefore, by uniquely complementedness of $\mathcal{R}(R)$, we have $f_y(Q)=Q$ and $f_q(y)=y$ for any $q\in Q$. This implies that $R=\ll y,Q\gg=Q\cup\{y\}$, and hence $Q=R\backslash\{y\}$. Then, similar to the proof of equivalency of (1) and (3), we get a contradiction, and hence $\mathcal{R}(R)$ is a Boolean algebra (see also \cite[Theorems 1,2]{Bir}). The converse  follows from the well-known fact that every Boolean algebra is a uniquely complemented lattice. 
\end{proof}
\section*{Acknowledgements}
The authors would like to thank Sara Saeedi Madani for her useful comments. The authors would also like to thank the institute for research in fundamental sciences (IPM) for financial support. The research of the second author was in part supported by a grant from IPM (No. 96050212).

\end{document}